\newtheorem*{rep@theorem}{\rep@title}
\newcommand{\newreptheorem}[2]{%
\newenvironment{rep#1}[1]{%
 \def\rep@title{#2 \ref{##1}}%
 \begin{rep@theorem}}%
 {\end{rep@theorem}}}
\date{\today}
\theoremstyle{definition}
\newtheorem{thm}{Theorem}[section]
\newtheorem{lem}[thm]{Lemma}
\newtheorem{cor}[thm]{Corollary}
\theoremstyle{remark}
\newtheorem*{rem}{Remark}
\newtheorem*{clm}{Claim}
\renewcommand{\SS}{\mathbb{S}}
\newcommand{\RR}{\mathbb{R}}
\newcommand{\CC}{\mathbb{C}}
\newcommand{\ZZ}{\mathbb{Z}}
\newcommand*\rfrac[2]{{}^{#1}\!/_{#2}}     
\newcommand{\del}{\partial}
\newcommand{\qt}[2]{\small\text{\raisebox{.5ex}{$#1/_{ #2}$\normalsize}}}   
\newcommand{\C}{\mathcal{C}}
\newcommand{\PL}{\mathcal{PL}}
\newcommand{\define}{\textbf}
\renewcommand{\int}{\mathrm{int}}
\newcommand{\N}{\mathcal{N}}
\renewcommand{\phi}{\varphi}
\renewcommand{\path}{C}
\newcommand{\B}{\mathcal{B}}
\newcommand{\Mf}{M_{\gamma\setminus\B}}
\newcommand{\Mb}{M_{\gamma\setminus \{B_q\}}}
\newcommand{\hatMb}{\widehat{M}_{\gamma\setminus \{B_q\}}}
\newcommand{\piSig}{\Sigma_\circ}
\newcommand{\I}{\mathfrak{i}}
\newcommand{\piSighat}{\widehat{\Sigma}_\circ}
\newcommand{\Sighat}{\widehat{\Sigma}}
\title{Small knots of large Heegaard genus}
\author[W. Worden]{William Worden}
\address{Department of Mathematics\\
Rice University MS-136\\
1600 Main St.\\
Houston, TX 77005}
\email{\href{mailto:william.worden@rice.edu}{william.worden@rice.edu}}
\begin{document}

\begin{abstract}
Building off ideas developed by Agol, we construct a family of hyperbolic knots $K_n$ whose complements contain no closed incompressible surfaces and have Heegaard genus exactly $n$. These are the first known examples of such knots. Using work of Futer and Purcell, we are able to bound the crossing number for each $K_n$ in terms of $n$.
	
\end{abstract}

\maketitle
\date\today

\section{Introduction}\label{sec:intro}

Closed incompressible surfaces in irreducible 3-manifolds have been a subject of great interest since the notion was introduced by Haken in \cite{Hak68}. Most notably, Thurston  in \cite{Thu82a} proved his celebrated geometrization conjecture for the case of closed irreducible 3-manifolds containing an incompressible surface (i.e., Haken manifolds) 20 years before the theorem was proved in full generality by Perelman \cite{Per02,Per03a,Per03b}. This piece of history suggests a need to better understand irreducible 3-manifolds containing \emph{no} closed incompressible surfaces (i.e., small manifolds). Answering a question of Reid, Agol in \cite{Ago03} constructed the first examples of small link complements having arbitrarily many components. By Dehn filling such a link, Agol was able to give the first examples of small closed manifolds having large Heegaard genus. Previous to Agol's result, many examples of small manifolds had been constructed (c.f. \cite{FH82}, \cite{CJR82}, \cite{Oer84}, \cite{Dun99}, \cite{HM93}, \cite{Lop92}, \cite{Lop93}), but the largest known Heegaard genus of a small manifold was 3 (in particular, by \cite{CS99}, this is true of many of the punctured torus bundles of \cite{FH82,CJR82}).

The small links Agol constructs are $n$-braids having $n$ components. Such braids are easily seen to have Heegaard genus at least $\frac{n}{2}$, and work of Rieck \cite{Rie00} (which generalized work of Moriah and Rubinstein \cite{MR97}) implies that one can find filling slopes such that the Heegaard genus does not decrease upon Dehn filling. By work of Hatcher \cite{Hat82}, the filling slopes can also be chosen so that the filled manifold remains small. While Agol's methods can easily be extended to construct small knots which are $n$-braids, one immediately loses the ability to control Heegaard genus. Nonetheless, Agol in \cite{Ago03} suggests that it should be possible to generalize his construction to exhibit examples of small knots having large Heegaard genus.

Although demonstrating the existence of small knots with large Heegaard genus is our main goal, we are actually able to prove the following stronger result:

\begin{thm}\label{thm:main}
For every $l\ge 1$, there exists a family $\{K_N\}_{N=4}^\infty$ of small $l$-component links such that $\SS^3\setminus K_N$ has Heegaard genus $n:=Nl$. Furthermore, $K_N$ is an $n$-bridge link, and can be constructed to have fewer than $4\pi n^5$ crossings. 
\end{thm}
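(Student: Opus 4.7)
The plan is to construct $K_N$ explicitly, prove smallness using Agol-style arguments, establish the Heegaard genus equality in two directions, and bound the crossing number via Futer-Purcell.

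\textbf{Construction.} Following Agol's framework, I would realize $K_N$ as the closure of a braid $\beta_N$ on $n=Nl$ strands whose underlying permutation is a product of $l$ disjoint $N$-cycles (yielding $l$ components), with the word chosen to include sufficient twisting between the cycles of strands to ensure hyperbolicity and smallness. The most natural candidate is to start from Agol's ``building block'' tangle and arrange $N$ copies along each of the $l$ component tracks, perhaps threaded together to force the link to be connected enough that tangle decompositions cannot split off inessential pieces, but loose enough that each component still remains a genuine circle in $S^3$.

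\textbf{Smallness.} The claim that $S^3\setminus K_N$ contains no closed incompressible surface would be proved by adapting Agol's original incompressibility argument. One puts a hypothetical closed incompressible surface into normal form with respect to the natural braid-band/bridge-sphere decomposition of the exterior, and shows that each intersection pattern is forced either to compress or to be boundary-parallel. The main adaptation from Agol's $n$-component case is handling the new possibility that a closed surface separates the $l$ components nontrivially; this has to be ruled out using the inter-cycle twisting of $\beta_N$.

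\textbf{Heegaard genus.} The upper bound $\leq n$ is automatic: an $n$-bridge presentation yields a genus-$n$ Heegaard splitting of the exterior by tubing along the bridge arcs. The lower bound $\geq n$ is the principal obstacle. Because $S^3\setminus K_N$ is small, every irreducible Heegaard splitting is strongly irreducible, so Casson-Gordon / Scharlemann-Thompson technology applies. I would aim to show that the rank of $\pi_1(S^3\setminus K_N)$ equals $n$ (since $\text{rank}\leq \text{Heegaard genus}$), by taking a Wirtinger-style meridional presentation coming from $\beta_N$ and showing that the twisting built into the braid word prevents any Nielsen reduction from eliminating a generator. Alternatively, one could try a direct argument: find an essential planar surface, perhaps the punctured bridge sphere, whose intersections with a hypothetical low-genus Heegaard surface (put in thin position) force enough saddle/maximum transitions that the Euler characteristic bound gives genus $\geq n$. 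I expect this step to be the hardest, since getting the exact bound $n = Nl$ rather than $n/2$ requires the construction to be quite rigid.

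\textbf{Crossing number.} With $\beta_N$ constructed of word length polynomial in $n$, the closure diagram already has a controllable number of crossings. The quantitative bound $<4\pi n^5$ would then follow by invoking the Futer-Purcell theorem relating crossing number to the volume of the hyperbolic complement (or to the twist number of an appropriate diagram). One would bound the volume of $S^3\setminus K_N$ from above in terms of $n$ using, e.g., a triangulation adapted to the braid, and then run Futer-Purcell backwards to upgrade this to the desired polynomial crossing bound, where the factor $4\pi$ reflects the standard volume-per-ideal-tetrahedron constant.
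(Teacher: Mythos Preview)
Your proposal has genuine gaps in the two places the paper flags as its actual contributions.

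\textbf{Heegaard genus lower bound.} You suggest proving $g(S^3\setminus K_N)\ge n$ either by computing $\mathrm{rank}(\pi_1)$ via Wirtinger presentations and Nielsen-reduction obstructions, or by a thin-position intersection count with a bridge sphere. Neither is what the paper does, and neither is likely to succeed without substantial new ideas. Rank is notoriously hard to compute exactly, and there is no mechanism here guaranteeing rank equals Heegaard genus; the thin-position sketch gives no indication of why a low-genus splitting would be forced to have $n$ saddles. The paper's actual argument is structurally different: $K_N$ is obtained by Dehn filling an augmented-braid manifold $M_C$, and the key tool is the Rieck--Sedgwick theorem (\Cref{thm:RS}). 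One drills back out the width-2 filling tori one at a time until the genus jumps; at that moment the minimal Heegaard surface of the partially-filled manifold yields an \emph{almost Heegaard} surface $H^\ast$, and Rieck--Sedgwick produces an essential surface $\Sigma$ with the same boundary slope and no larger genus. Because everything in $M_C$ is built from pants blocks, $\Sigma$ must be a tubing of pants, and a careful count of how many pants are needed (two of each size $3,\dots,n$) forces $\mathrm{genus}(\widehat\Sigma)\ge n$ unless $\Sigma$ falls into a short list of configurations, each of which is then shown to boundary-compress. This Rieck--Sedgwick step is the ``main novelty'' the introduction advertises, and it is entirely absent from your outline.

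\textbf{Crossing number.} You have the role of Futer--Purcell inverted. Their result is not used to convert a volume bound into a crossing bound. In the paper it is used to \emph{effectivize} the good-filling hypothesis: \cite[Theorem 1.1]{FP13} says that taking each filling coefficient $s_i\ge 2\pi(2n-l)$ guarantees that every Heegaard surface of genus $\le n$ arises as a good filling, so that Rieck--Sedgwick applies. The crossing bound then comes from elementary bookkeeping: a $\tfrac{1}{s}$ filling on a width-$j$ augmentation loop inserts $s\cdot j(j-1)$ crossings, and summing over all loops with $s_i\approx 2\pi(2n-l)$ gives the $4\pi n^5$ estimate directly. No volume--crossing relation is invoked.

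Your smallness sketch is closer in spirit (normal-surface-in-blocks is indeed the engine), but note that the argument runs inside $M_C$ after isotoping the surface out of the filling solid tori, not with respect to a bridge-sphere decomposition of the final link exterior.
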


Thus for any $l$, we can build small $l$-component links, with Heegaard genus as large as desired. We immediately get the following corollary:

\begin{cor}\label{cor:main}
There exists a family $\{K_n\}_{n=4}^\infty$ of small knots such that $\SS^3\setminus K_n$ has Heegaard genus $n$. Furthermore, $K_n$ is an $n$-bridge knot, and can be constructed to have fewer than $4\pi n^5$ crossings. 
\end{cor}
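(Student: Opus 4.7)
The plan is to obtain Corollary \ref{cor:main} as the immediate $l=1$ specialization of Theorem \ref{thm:main}. A one-component link is by definition a knot, so applying the theorem with $l=1$ produces for each $N \ge 4$ a small knot whose complement has Heegaard genus $n = N \cdot 1 = N$. Because the indices $N$ and $n$ coincide when $l = 1$, the family $\{K_N\}_{N=4}^\infty$ of the theorem is, after relabeling, exactly the family $\{K_n\}_{n=4}^\infty$ required by the corollary. The bridge-number assertion of the theorem ("$K_N$ is an $n$-bridge link") specializes verbatim to "$K_n$ is an $n$-bridge knot," and the crossing bound $4\pi n^5$ transfers without change since it depends only on $n$.

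There is no genuine mathematical obstacle at the level of the corollary itself, since all the content lies in Theorem \ref{thm:main}. The one thing I would want to verify when the construction of the theorem is presented is that the $l = 1$ case is not excluded or degenerate, i.e., that the construction yields a single connected embedded component rather than, say, a split link or an immersed object when only one component is requested. Provided the construction of Theorem \ref{thm:main} is organized uniformly in $l \ge 1$, this is automatic and no separate argument is required; the corollary then follows by simply quoting the theorem with $l = 1$.
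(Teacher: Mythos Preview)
Your proposal is correct and matches the paper's approach exactly: the paper states the corollary immediately after Theorem~\ref{thm:main} with the words ``We immediately get the following corollary,'' providing no separate proof, since it is simply the $l=1$ case of the theorem. Your concern about $l=1$ being excluded is unfounded, as Theorem~\ref{thm:main} is explicitly stated for every $l\ge 1$, and the construction in \Cref{sec:braids} begins by fixing an integer $l\ge 1$.
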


Proving \Cref{thm:main} relies fundamentally on Agol's work in \cite{Ago03}. The main novelty of our work lies in our approach to controlling Heegaard genus, for which our main tool is a theorem of Rieck and Sedgwick \cite{RS01}. One advantage of our approach is that it allows us to determine the Heegaard genus on the nose, in contrast to Agol's bound of $\frac{n}{2}$. By Dehn filling each $\SS^3\setminus K_N$ along an appropriate slope system, one can obtain small closed 3-manifolds having Heegaard genus exactly $n$.

The bound on crossing number given in \Cref{thm:main} is calculated using recent work of Futer and Purcell, which effectivizes the main results of \cite{MR97} and \cite{RS01}.

\begin{rem}
There is an alternate approach to proving \Cref{thm:main}, suggested to us by Dave Futer. After using Agol's construction to obtain small $l$-component links, one could then appeal to work of either Biringer-Souto \cite{BS16} or Bachman-Schleimer \cite{BS05}. In particular, the link $K_N$ is built by Dehn filling a braid augmented by loops, each of which encircles some of its strands (see \Cref{fig:M_C}). Dehn filling these loops gives a fibered manifold with fiber a punctured disk, which is an $l$-component braid in a solid torus (an $(l+1)$-component link). Long Dehn fillings can be shown to produce a monodromy for this fibered manifold which has large translation distance in the curve complex, and either of \cite{BS16} or \cite{BS05} then implies that the Heegaard genus is $n+1$ (the genus is subsequently reduced to $n$ when the extra component is filled). This approach comes with a caveat, though. Both results \cite{BS16} and \cite{BS05} apply, as written, only to closed hyperbolic 3-manifolds. Thus one would first have to generalize one of these results to cusped hyperbolic 3-manifolds. As our proof is more direct, and more elementary, we feel it is the right approach. 
	
\end{rem}

The paper is organized as follows. In \Cref{sec:bg}, we cover some needed background on pants decompositions, essential surfaces, and Heegaard splittings. In \Cref{sec:const} we describe Agol's construction and state a version of the main lemma in Agol's paper. In \Cref{sec:braids} we describe our particular application of Agol's construction to obtain a family of links which we will Dehn fill to get the desired links $K_N$. Finally, in \Cref{sec:proof} we prove \Cref{thm:main}. For the sake of clarity, we push most of the proof of \Cref{thm:main} into a series of lemmata, before finishing up the proof at the end of the section.

\subsection{Acknowledgements}

We thank Alan Reid for bringing to our attention Agol's work, and for many helpful conversations throughout the course of this project. We also thank Dave Futer for showing us the alternate approach outlined in the preceding remark, and for other helpful conversations. Finally, we thank the referees for many helpful suggestions that improved the exposition.

\section{Preliminaries}\label{sec:bg}

Throughout the paper, all surfaces and 3-manifolds will be assumed to be orientable, and all isotopies will assumed to be smooth.

\subsection{Pants Decompositions}

Given a connected compact surface $\Sigma$, we say $\Sigma$ is of type $(g,n)$ if it is a genus $g$ surface with $n$ boundary components (and no punctures). Let $\Sigma$ be such a surface, with $\chi(\Sigma)<0$. We define a \define{pants decomposition} $P$ of $\Sigma$ to be a choice of disjoint, simple closed curves $\alpha_1,\dots, \alpha_m$ on $\Sigma$ that cut $\Sigma$ into a disjoint union of surfaces of type $(0,3)$, called \define{pants}. Our definitions in this section follow those of \cite{HLS00}, though we note that pants decompositions appear in the literature much earlier \cite{HT80}. Any pants decomposition of a surface of type $(g,n)$ consists of $3g-3+n$ curves, and has $|\chi(\Sigma)|$ pants. Let $\alpha$ be a curve in a pants decomposition $P$ for $\Sigma$. If we cut $\Sigma$ along all curves \emph{except} $\alpha$, then the connected component $\Sigma_0$ containing $\alpha$ will either be a surface of type $(1,1)$ (that is, a one-holed torus), or a surface of type $(0,4)$ (a four-holed sphere). In the first case, let $\alpha'\subset \Sigma_0$ be a simple closed curve that transversely intersects $\alpha$ once. If we replace $\alpha$ by $\alpha'$, we get a new pants decomposition $P'$. In this case we say that $P'$ is obtained from $P$ via a \define{simple move}, or $S$-move (see \Cref{fig:S_move}). If, on the other hand, $\Sigma_0$ is a four-holed sphere, and $\alpha'$ is a curve that transversely intersects $\alpha$ twice (and cannot be made disjoint from $\alpha$ by isotopy), then replacing $\alpha$ with $\alpha'$ again gives a new pants decomposition $P'$, which we say is obtained by an \define{associative move}, or $A$-move, on $P$ (see \Cref{fig:A_move}).

\begin{figure}[h]
	\begin{subfigure}{.49\textwidth}
 		\centering
   		\includegraphics[width=.8\textwidth]{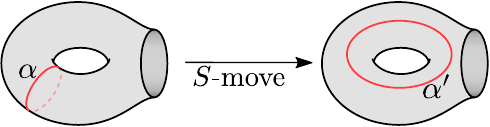}
   		\caption{}
   		\label{fig:S_move}
	\end{subfigure}
	\begin{subfigure}{.49\textwidth}
 		\centering
   		\includegraphics[width=.8\textwidth]{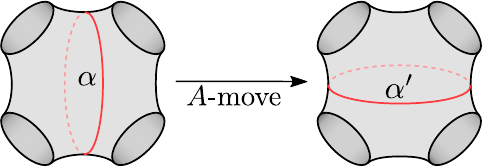}
   		\caption{}
   		\label{fig:A_move}
	\end{subfigure}	
	\caption{Left: An $S$-move. Right: An $A$-move.}
	\label{fig:moves}
\end{figure}

\subsection{Essential surfaces}\label{sec:essential}

Let $M$ be a compact irreducible manifold, and $\Sigma$ a surface properly embedded in $M$. A \define{compression disk} for $\Sigma$ is an embedded disk $D$ in $M$ with boundary $\del D=D\cap\Sigma$, such that $\del D$ does not bound a disk in $\Sigma$. If $\Sigma$ has no compression disks, then it is \define{incompressible}. Otherwise it is \define{compressible}. A disk $D$ with $\mathrm{int}(D)\cap\Sigma=\emptyset$ is a \define{boundary compression disk} if $\del D$ is a union of two simple arcs $\alpha$ and $\beta$ with $\alpha\subset \Sigma$ and $\beta \subset \del M$, such that $\alpha$ is not isotopic into $\del \Sigma\subset \Sigma$. If $\Sigma$ has no boundary compression disks, then it is \define{boundary incompressible}. If $\Sigma$ is incompressible and boundary incompressible, and is not isotopic into $\del M$, then it is \define{essential}.

\begin{thm}\cite{Hat82}\label{thm:Hatcher}
Let $M$ be an orientable, compact, irreducible 3-manifold with $\del M$ a union of $n$ tori. Then the projective classes of curve systems in $\del M$ which bound incompressible, $\del$-incompressible surfaces in $M$ form a dense subset of a finite (projective) polyhedron in $\PL(\del M)\cong \SS^{2n-1}$ of dimension $\le n-1$.	
\end{thm}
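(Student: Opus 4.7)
The plan is to apply normal surface theory. First, I would fix a triangulation $\mathcal{T}$ of $M$ whose restriction to each boundary torus $T_i$ is a triangulation of $T_i$. Because $M$ is irreducible and $\Sigma$ is essential, standard arguments using innermost disks and outermost arcs allow one to isotope any incompressible, $\del$-incompressible surface $\Sigma$ into normal form with respect to $\mathcal{T}$.

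Next, I would analyze the space of normal surfaces. Each normal surface is encoded by a vector of non-negative integers recording the number of disks of each normal type (triangles and quadrilaterals). These counts satisfy linear matching equations at each $2$-face of $\mathcal{T}$, cutting out a rational polyhedral cone $\mathcal{C} \subset \mathbb{R}^N_{\ge 0}$, where $N$ is the total number of disk types. The cone $\mathcal{C}$ is generated by finitely many fundamental vertex solutions, and every embedded normal surface corresponds (up to a combinatorial compatibility condition on quadrilateral types) to a lattice point in $\mathcal{C}$.

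The key step is to analyze the boundary map $\del \colon \mathcal{C} \to \mathcal{L}(\del M)$ that sends a normal surface to its boundary curve system. This map is piecewise linear: on each open face of $\mathcal{C}$ the set of normal disk types appearing is fixed, so $\del$ restricts there to a linear map. Crucially, on each torus $T_i$ the boundary of a $\del$-incompressible surface consists of parallel essential circles of a single slope, since any trivial boundary component would, via an innermost disk argument, yield either a compression or a $\del$-compression. Thus along any open face of $\mathcal{C}$ the slopes on each $T_i$ are locally constant, and only the $n$ weights (one per torus) can vary.

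For the dimension bound, fix an open face $F$ of $\mathcal{C}$. Once the slopes on each $T_i$ are determined, the image $\del F$ lies in an $n$-dimensional linear subspace of $\mathcal{L}(\del M) \cong \mathbb{R}^{2n}$, parameterized by the $n$ weights; projectivizing reduces the dimension by one. Since $\mathcal{C}$ has only finitely many faces, the full image in $\PL(\del M)$ is a finite union of projective polyhedra, each of dimension at most $n-1$. Density of the realized curve systems follows because lattice points (corresponding to honest embedded normal surfaces) are dense in any rational polyhedral cone. The main obstacle is the normal-form reduction itself and the verification that essentiality survives it; this requires standard but delicate care with boundary compressions and with the possible presence of trivial normal annuli, which must be stripped off without destroying essentiality.
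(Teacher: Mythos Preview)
The paper does not prove this theorem; it is quoted from Hatcher \cite{Hat82}, and the only comment given is that the proof ``relies fundamentally on a result of Floyd and Oertel \cite{FO84}, which shows that any such surface is carried by finitely many branched surfaces in $M$.'' So there is no in-paper proof to compare against, only the indication that the branched-surface machinery of Floyd--Oertel is the essential input.

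Your normal-surface outline has a genuine gap at the step you flag as ``the key step.'' You argue that on each open face $F$ of the normal cone the boundary slope on each $T_i$ is locally constant, and you justify this by observing that any individual $\del$-incompressible surface has boundary consisting of parallel curves of a single slope. But that observation is about \emph{one} surface at a time; it says nothing about how the slope varies as you move within $F$. Concretely, on a one-vertex triangulation of $T^2$ there are three normal arc types, and a face of the normal cone in which all three boundary arc types are active contains normal curves of infinitely many distinct slopes. So the linear map $\del\colon F\to \mathcal{L}(\del M)$ can have image of dimension larger than $n$, and your dimension count collapses. Restricting attention to the lattice points in $F$ that happen to represent essential surfaces does not help: you would need to show those lattice points all lie in an $n$-dimensional affine subspace, which is exactly the content of the theorem.

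This is precisely why Hatcher's argument passes through Floyd--Oertel. Their finiteness theorem produces finitely many \emph{incompressible, $\del$-incompressible branched surfaces} $B_1,\dots,B_k$ carrying every essential surface. The point is that the boundary $\del B_j\cap T_i$ of such a branched surface on a torus is itself a single essential simple closed curve (this uses the no-monogon and no-disk-of-contact conditions in the definition of incompressible branched surface). Hence every surface carried by $B_j$ has boundary on $T_i$ a multiple of that fixed slope, and the image of the cone of surfaces carried by $B_j$ really is at most $n$-dimensional. Your raw normal-surface faces do not carry this extra incompressibility structure, so they do not pin down the boundary slope. If you want to run a normal-surface proof, you must either pass to branched surfaces as Hatcher does, or supply a separate argument that essential surfaces lying in a common face of $\mathcal{C}$ share boundary slopes --- and that argument is the real work.
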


The above result of Hatcher relies fundamentally on a result of Floyd and Oertel \cite{FO84}, which shows that any such surface is carried by finitely many branched surfaces in $M$. The slopes that must be avoided in \Cref{thm:Hatcher} are precisely those that are carried by the boundaries of the branched surfaces given by Floyd--Oertel. 

We are interested in the case in which $M$ has $m+1$ boundary components, $m$ of which are filled along slopes of the form $\frac{1}{s}$, $0\ne s\in \ZZ$. Since we will only be concerned with surfaces that do not meet the unfilled boundary component of $M$, the subset that we need to avoid has dimension $\le m-1$. The space of such fillings, on the other hand, consists of integer lattice points in an affine subspace of $H_1(\del M; \RR)\cong \RR^{2m+2}$ of dimension $m$, which in $\PL(\del M)$ accumulates on a subspace of dimension $m-1$. Thus infinitely many of these projected lattice points are outside \emph{any} subspace of dimension $m-1$, and we can choose our filling slope system to avoid the polyhedron given by Hatcher's theorem. In fact, more is true: if $\gamma=(\frac{1}{s_1},\dots, \frac{1}{s_m},\infty)$ is a curve system in Hatcher polyhedron, then for some $i$, increasing $s_i$ by 1 will give a curve system outside the polyhedron. This is because there are $m$ such linearly independent $s_i$ to choose from, and the Hatcher polyhedron has dimension $m-1$.

\subsection{Heegaard splittings}	\label{sec:heegaard}

The main tool that will allow us to extend Agol's result to knots in $\SS^3$ is a result of Rieck and Sedgwick, which describes how Heegaard surfaces that appear after Dehn filling lead to essential surfaces in the unfilled manifold. To state this theorem we will need some terminology.

Let $M$ be a compact connected manifold with boundary a union of tori. A \define{Heegaard surface} $H\subset M$ for $M$ is an embedded closed surface that cuts $M$ into two \define{compression bodies}, thus giving a \define{Heegaard splitting} for $M$. For our purposes, a compression body is defined to be a connected, compact 3-manifold obtained by attaching 1-handles to the $T^2\times \{1\}$ boundaries of a disjoint union of thickened tori $T^2\times I$, or to a ball (in which case we get a \define{handlebody}). The \define{Heegaard genus} of $M$, which we will denote by $g(M)$, is the minimal genus over all Heegaard surfaces for $M$.

Now let $T$ be a boundary torus of $M$, and let $\alpha$ be a Dehn filling slope on $T$. Denote by $M_\alpha$ the manifold resulting from Dehn filling along $\alpha$. Let $H$ be a Heegaard surface for $M_\alpha$, and let $\rho$ be the core of the filling solid torus. Then one of the following holds:

\begin{itemize}

\item[(1)] 	$\rho$ is isotopic into $H$ and $H$ is a Heegaard surface for $M$.

\item[(2)] $\rho$ is isotopic into $H$ and $H$ is \emph{not} a Heegaard surface for $M$, or

\item[(3)] $\rho$ is \emph{not} isotopic into $H$ and $H$ is \emph{not} a Heegaard surface for $M$.

\end{itemize}

Our goal is to construct knots having large Heegaard genus, by Dehn filling $m$ components of an $m+1$-component link complement, which has Heegaard genus $\ge \frac{m}{2}$. Therefore we would like to avoid (3) and, when possible, (2). By \cite[Corollary 4.2]{Rie00}, only finitely many filling slopes need to be excluded to ensure that (3) is avoided, so if we are filling multiple boundary components, we only need to avoid finitely many slopes on each. The main obstacle then is (2). When (2) happens, there is a unique (isotopy class of) curve $\beta$ on $T\subset \del M$ such that $\beta$ is isotopic onto $H$, regarded as a surface in $M$. If we cut $H$ along $\beta$ we get a surface $H^\ast$ with two boundary components on $T$ of slope $\beta$. In this case we will have $g(M_\alpha)=g(M)-1$, and this will be true for \emph{every} filling slope on $T$ that intersect $\beta$ once. Such a line of Dehn surgery slopes is called a \define{destabilization line} for $T$. Since we will always be filling along a slope of the form $\frac{1}{s}$, our main concern is when the longitude of a boundary torus defines a destabilization line, since in this case every slope of this form results in a reduction in Heegaard genus. 

The surface $H^\ast$ described above is called an \define{almost Heegaard} surface. It requires only a single stabilization, which increases genus by 1, to become a Heegaard surface for $M$. When a filling is of type (3), it is called a \define{bad} filling. For bad fillings, one has no control over how much the genus may drop. Fillings of type (1) and (2) are called \define{good} fillings, and are further characterized by the following theorem of Rieck and Sedgwick:

\begin{thm}\cite[Lemma 3.4, Theorem 5.1]{RS01}\label{thm:RS}
Let $M$ be a compact, orientable, acylindrical manifold with boundary a union of tori, and let $(M_\alpha,H)$ be a good filling. Then one of the following holds:
\begin{itemize}
\item[(1)] $H$ is a Heegaard surface for $M$ (perhaps after an isotopy in $M_\alpha$), and $H$ is boundary compressible.
\item[(2)] the slope of the almost Heegaard surface $H^\ast$ is the boundary slope of a separating essential surface of genus less than or equal to that of $H^\ast$.	
\end{itemize}

The essential surface given by (2) will have exactly two boundary components, though it may be disconnected. In fact, such a surface is obtained by compressing $H^\ast$, then throwing away components that do not have boundary, as is immediately clear from the proof given in \cite{RS01}.

\end{thm}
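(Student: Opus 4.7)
My plan divides into the two cases of good fillings.

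For case~(1), where $H$ is, after isotopy in $M_\alpha$, a Heegaard surface for $M$: since $\rho$ is isotopic into $H$, there is a curve $c \subset H$ bounding a meridian disk $D$ of the filling solid torus. Viewing the trace of $D$ in $M$ as an annulus $A$ connecting $c \subset H$ to a slope on $T \subset \partial M$, a standard innermost-disk and outermost-arc surgery on $A \cap H$ yields a boundary compression disk for $H$ in $M$. This is conclusion~(1).

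For case~(2), I would take the almost Heegaard surface $H^*$, obtained by cutting $H$ along $\beta$, and perform a maximal sequence of compressions on $H^*$ in $M$. Any closed components that arise are discarded, producing a properly embedded surface $S$ with $\partial S = \partial H^* \subset T$, hence of slope $\beta$, and of genus $\le g(H^*)$ since compressions never increase genus. The surface $S$ is separating because $H$ separates $M_\alpha$ into two compression bodies and $\beta$ bounds a meridian disk on one side, so cutting yields two regions glued along an annulus in $T$. Incompressibility of $S$ holds by construction of the compression sequence.

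The main obstacle, and the technical heart of the argument, is verifying that $S$ is boundary incompressible and non-peripheral. Any purported boundary compression of $S$ should be shown either to contradict maximality of the compression sequence, or to produce an essential annulus in $M$, which is impossible by acylindricity. Similarly, boundary-parallelism of a component of $S$ would either mean $H^*$ was already inessential (reducing to a simpler case) or would force an essential annulus, again ruled out by acylindricity. Finally, some bookkeeping is needed to guarantee that the two curves of $\partial H^*$ are carried by components of $S$ that survive the discarding step, so the resulting $S$ is nonempty with boundary slope exactly $\beta$. These verifications are exactly those carried out in \cite[Lemma~3.4 and Theorem~5.1]{RS01}, which I would invoke directly rather than reprove.
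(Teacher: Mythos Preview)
The paper does not give its own proof of this theorem: it is stated as a citation of \cite[Lemma~3.4, Theorem~5.1]{RS01}, with only the parenthetical remark that the essential surface in~(2) arises by compressing $H^\ast$ and discarding closed components. Your proposal likewise ends by invoking \cite{RS01} directly, so in substance it matches the paper's treatment; the sketch you supply for the two cases is additional detail beyond what the paper provides, and is broadly in line with the Rieck--Sedgwick argument.
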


\section{Agol's construction}\label{sec:const}
Let $F$ be a surface of type $(g,n)$, and let $\phi:F\to F$ be a homeomorphism. Denote by $T_\phi$ the mapping torus $\qt{(F\times I)}{\{(x,0)\sim (\phi(x),1)\}}$. Let $P$ be a pants decomposition of $F$, and let $\phi(P)$ be the image of $P$ under the homeomorphism $\phi$. Let $C=\{P=P_0,P_1,\dots,P_m=\phi(P)\}$ be a path in the pants graph from $P$ to $\phi(P)$, such that no (isotopy class of) curve $\alpha$ on $F$ is contained in every $P_i$. Let $\beta_i$ be the curve of $P_i$ that is replaced when passing from $P_i$ to $P_{i+1}$, $0\le i\le m-1$. The requirement that no curve $\alpha$ is contained in every $P_i$ is equivalent to requiring that every curve in $P$ appears as some $\beta_i$. For each curve $\beta_i$, let $L_i=\beta_i\times \{\rfrac{i}{m}\}$, and remove a neighborhood $\N(L_i)$ of $L_i$ from $T_\phi$ to obtain a compact manifold $M_C=T_\phi\setminus \bigcup_i \N(L_i)$. Define the \define{horizontal} boundary of $M_C$ to be those boundary components coming from drilling out neighborhoods $\N(L_i)$, and define the \define{vertical} boundary to be the boundary components coming from $T_\phi$.

Here is another way to build $M_C$. First, let $T_1$ be a torus with one disk removed, and let $B_S$ be obtained by thickening $T_1$ to get $T_1\times I$, then retracting the annulus $\del T_1\times I$ to its core curve. Similarly, for $S_4$ a sphere with four disks removed, let $B_A$ be the result of thickening $S_4$ then retracting the annuli $\del S_4 \times I$ to their core curves. Begin with the surface $F$, marked by the curves of the pants decomposition $P=P_0$. Suppose $P_0 \to P_1$ is an $S$-move, so that the curve $\beta_0$ lies on a subsurface of $F$ which is a one-holed torus with boundary some other curve $\beta'$ of $P_0$. Now glue a copy of $B_S$ to this subsurface along $T_1\times \{0\}$, so that the pinched $\del T_1\times I$ glues to $\beta'$, and mark $T_1\times \{1\}$ with the curve that replaces $\beta_0$. If $P_0 \to P_1$ is an $A$-move, the operation is similar, except the subsurface is a four-punctured sphere. Continue in this way for each move $P_i\to P_{i+1}$. Since every curve in $P_0$ is eventually replaced, the result is a copy of $F\times I$ with embedded curves $\{\beta_i\}_{i=0}^m$, which can be isotoped so that $\beta_i$ is on the fiber $F\times \{\frac{i}{m}\}$. The curves on the bottom glue to the curves on the top via $\phi$, and we obtained $M_C$ by drilling out a neighborhood of each curve. This shows that $M_C$ can be decomposed into \define{$A$-blocks} and \define{$S$-blocks} by cutting along pants. An $A$-block is defined here to be the complex obtained by removing from $B_A$ neighborhoods of two simple closed curves, one on $S_4\times \{0\}$ and one on $S_4\times\{1\}$, which intersect twice as curves on $S^4$, and a neighborhood of the pinched $\del S_4\times I$ (see \Cref{fig:decomp}, bottom left). An $S$-block is defined similarly. In general, we will call both $A$-blocks and $S$-blocks \define{pants blocks}.

\begin{figure}[h]
 	\centering
   	\includegraphics[width=.95\textwidth]{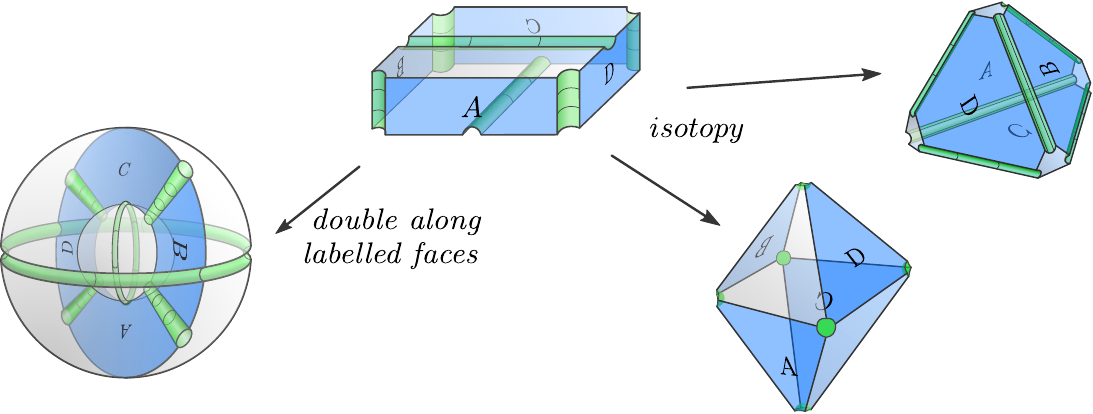}
   	\caption{An $A$-block (lower left) is obtained by doubling a vertex-truncated octahedron (lower right) along some of its faces. Alternatively, an ideal $A$-block is obtained by doubling an ideal octahedron along some of its faces. Since a truncated octahedron can be identified with a tetrahedron minus a neighborhood of its one-skeleton, an $A$-block can also be viewed as a doubling of such a tetrahedron.}
 \label{fig:decomp}
\end{figure}

If $R$ is an $A$-block or $S$-block of $M_C$, then we will call $R\cap \mathrm{int}(M_C)$ an \define{ideal} $A$- or $S$-block (or, in general, an ideal pants block). Each ideal pants block can be obtained by gluing either one or two ideal octahedra, as is shown in \Cref{fig:decomp}. If these octahedra are given the structure of regular ideal hyperbolic octahedra, then the boundary of each ideal pants block is totally geodesic, and gluing them along their geodesic faces gives $\mathrm{int}(M_C)$ a complete, finite volume, hyperbolic structure, as was observed by Agol \cite{Ago03}.

\subsection{Tubing pants}

In this section, $M_f$ will be a manifold obtained by Dehn filling any number of the horizontal boundary components of $M_C$ (possibly none). Boundary components of $M_C$ have a natural longitude and meridian, which induce a framing on the boundary components of $M_f$. We can also refer to horizontal and vertical boundary components of $M_f$, according to whether the corresponding boundary component of $M_C$ is horizontal or vertical. We will say a surface $\Sigma \subset M_f$ has \define{horizontal} boundary if every component of $\del \Sigma$ is isotopic to either the meridian of a vertical boundary component, or the longitude of a horizontal boundary component. 

An annulus $A\subset M_f$ with interior disjoint from $\Sigma$ is called a \define{compression annulus} for $\Sigma$ if it has one boundary component $\del A^+$ in $\mathrm{int}(\Sigma)$ and the other boundary component $\del A^-$ on $\del M_f$ is horizontal, and cannot be isotoped into $\Sigma$ via an isotopy through which $\del A^-\subset \del M_f$. If $A$ is a compression annulus for $\Sigma$ then we can \define{compress along $A$} by  attaching $(A\times I)$ to $\Sigma$, then removing $A\times \mathrm{int}(I)$.
When a (possibly disconnected) surface $(\Sigma,\del\Sigma)\subset(M_f,\del M_f)$ has two horizontal boundary components $\alpha_1$ and $\alpha_2$ on a boundary torus $T\subset \del M_f$, we can \define{tube} $\Sigma$ along $T$ by attaching an annulus along the $\alpha_i$ and pulling it away from $\del M_f$. This introduces a compression annulus, along which we can \define{untube} $\Sigma$ by compressing the annulus (see \Cref{fig:tubing}). Note that tubing involves a choice of annulus, but we will see later that for our applications both choices will result in the same surface up to isotopy.

\begin{figure}[h]
	\begin{subfigure}{.7\textwidth}
 		\centering
   		\includegraphics[width=.8\textwidth]{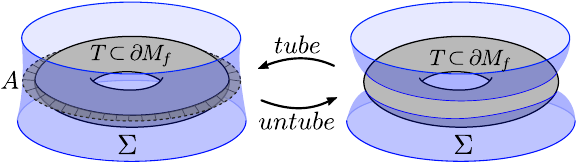}
   		\caption{}
   		\label{fig:tubing}
	\end{subfigure}
	\begin{subfigure}{.29\textwidth}
 		\centering
   		\includegraphics[width=.85\textwidth]{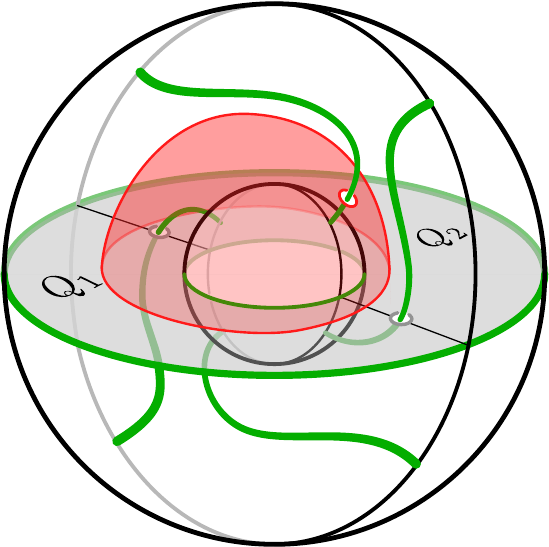}
   		\caption{}
   		\label{fig:quads}
	\end{subfigure}	
	\caption{Left: compressing $\Sigma$ along $A$ (i.e., untubing) by cutting along the dotted intersection produces two new boundary components. Conversely, tubing along a boundary torus produces a compression annulus. Right: two normal quads $Q_1$ and $Q_2$ glue together to give a twice punctured annulus $A_2$ in an $A$-block. In this case $A_2$ has a compression annulus, shown here as a red dome.}
	\label{fig:P1_to_P2}
\end{figure}

The following lemma shows that compressing along annuli preserves incompressibility for $M_f$ with hyperbolic interior.

\begin{lem}\label{lem:compress_annulus}
Let $M_f$ be a filling of $M_C$ with hyperbolic interior, and let $\Sigma$ be an incompressible surface. Then the surface $\Sigma'$ resulting from compressing $\Sigma$ along a compression annulus $A$ is also incompressible.	
\end{lem}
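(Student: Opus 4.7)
The plan is to assume $\Sigma'$ admits a compression disk $D$ and derive a contradiction with the incompressibility of $\Sigma$ via two innermost-circle reductions followed by a topological argument. First, push $A$ slightly off $\Sigma'$ so $A\cap\Sigma'=\emptyset$, and make $D$ transverse to $A$. Since $\del D\subset\int(\Sigma')$ and $\del A$ lies outside $\int(\Sigma')$, the intersection $D\cap A$ consists of disjoint simple closed curves in $\int(D)$. A standard innermost-circle argument eliminates these: an innermost $c\subset D\cap A$ bounds a subdisk $E\subset D$, and $c$ either caps off with a subdisk of $A$ (if $c$ is inessential in $A$) to form a sphere bounding a ball by irreducibility of $M_f$, or caps off with a sub-annulus of $A$ (if $c$ is essential in $A$, hence isotopic in $A$ to $\del A^-$) to form a compression disk for $\del M_f$ with essential boundary $\del A^-$, contradicting incompressibility of $\del M_f$ --- a consequence of $M_f$ having hyperbolic interior. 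Thus we may take $D\cap A=\emptyset$.

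Next, I identify $\Sigma$ up to ambient isotopy in $M_f$ with $\Sigma'\cup\tau$, where $\tau$ is the tubing annulus on the boundary torus $T\supset\del A^-$ (pushed slightly into $M_f$) connecting the two new boundary circles $\del A_+^-,\del A_-^-$ of $\Sigma'$ --- these being the boundaries on $T$ of the two parallel copies $A_+,A_-$ of $A$ sitting in $\Sigma'$. This identification is just the observation that untubing along $A$ is inverse to tubing along $\tau$. Arranging $\tau$ disjoint from $A$, the same innermost-circle argument applied to $D\cap\tau$ (essential circles again produce a compression disk for $T$, contradicting hyperbolicity) lets us further arrange $D\cap\tau=\emptyset$ while preserving $D\cap A=\emptyset$. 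Since $\Sigma'\cup\tau$ is ambient-isotopic to the incompressible $\Sigma$, it is itself incompressible, so $\del D\subset\Sigma'\subset\Sigma'\cup\tau$ must bound a disk $F\subset\Sigma'\cup\tau$.

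To finish, I show $F\subset\Sigma'$, contradicting $\del D$ being essential in $\Sigma'$. The key input is that $\del A^+$ is essential in $\Sigma$: otherwise a disk $F_0\subset\Sigma$ with $\del F_0=\del A^+$ would combine with $A$ to form a compression disk for $T$ with essential boundary $\del A^-$, contradicting hyperbolicity. Consequently $\del A_+^-$ and $\del A_-^-$ are essential in $\Sigma'\cup\tau$, being parallel in the annulus $A_+\cup\tau\cup A_-\subset\Sigma'\cup\tau$ to $\del A^+$ (transferred via the identifying isotopy). Hence neither $\del A_+^-$ nor $\del A_-^-$ lies inside the disk $F$. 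Combined with $\del F=\del D$ being disjoint from $\tau$, this makes $F\cap\tau=\int(F)\cap\int(\tau)$ both open and closed in $\Sigma'\cup\tau$, hence empty since $F$ is a disk while $\Sigma'\cup\tau$ is not. Therefore $F\subset(\Sigma'\cup\tau)\setminus\int(\tau)=\Sigma'$, so $\del D$ bounds in $\Sigma'$, contradicting the choice of $D$. The most delicate step is this final clopen argument, whose essential inputs --- essentiality of $\del A^+$ in $\Sigma$ and of $\del A^-$ on $T$ --- are precisely what hyperbolicity of $M_f$ supplies.
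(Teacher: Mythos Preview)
Your argument is correct and follows essentially the same strategy as the paper's: regard $\del D$ as lying on (a surface isotopic to) $\Sigma$, invoke incompressibility of $\Sigma$ to obtain a disk $F$ there, and then use hyperbolicity---via essentiality of $\del A^{\pm}$---to force $F\subset\Sigma'$. One minor point: your first step (arranging $D\cap A=\emptyset$) is never used downstream, and in fact the thin tubing annulus $\tau$ cannot be made disjoint from $A$ (they meet in a core circle), so the clause ``while preserving $D\cap A=\emptyset$'' should simply be dropped; this does not affect the validity of the rest of the proof.
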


\begin{proof}
Let $\del A^+$ be the boundary component of $A$ lying on $\Sigma$. Suppose there is a (non-boundary) compressing disc $D$ for $\Sigma'$. We may assume that $\del D$ is bounded away from $\del\Sigma'$, so that $\del D\cap \del A^+=\emptyset$. Now suppose that $\del D$ bounds a disk $D_\Sigma$ in $\Sigma$. Since $\del A^+$ is disjoint from $\del D$, we must have that $\del A^+$ is contained in $\int(D_\Sigma)$ or $\del A^+\cap D_\Sigma=\emptyset$. In the first case we must have $\del D$ isotopic to $\del A^+$ and hence $\del D$ is isotopic through $A$ to an essential curve in $\del M_f$. Since $M_f$ has hyperbolic interior this is impossible \cite{Thu82a}. Therefore we are in the second case, so when we compress the annulus $A$ to get $\Sigma'$, $D_\Sigma$ is preserved as a disk in $\Sigma'$, contradicting our assumption that $D$ was a compression disk. Thus if $D$ is a compression disk in $\Sigma'$, it is also a compression disk in $\Sigma$, but this is impossible since $\Sigma$ was assumed to be incompressible.
\end{proof}

The following lemma is due to Agol \cite{Ago03}. Since our statement of this lemma is somewhat different than in \cite{Ago03}, and since it is of central importance to the proof of \Cref{thm:main}, we reproduce Agol's proof below.

\begin{lem}[\cite{Ago03}]\label{lem:tubing}
Let $M_f$ be a filling of $M_C$ with hyperbolic interior. Then every incompressible surface in $M_f$ that is isotopic into $M_C$ and has horizontal boundary is obtained by tubing pants in $M_C$.	
\end{lem}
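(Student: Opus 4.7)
The plan is to put $\Sigma$ in normal form with respect to the decomposition of $M_C$ into pants blocks, and then analyze the local pieces of $\Sigma$ inside each block. First, isotope $\Sigma$ into $M_C$ (as permitted by hypothesis) and make it transverse to the union $\mathcal{P}$ of decomposing pants, minimizing the number of components of $\Sigma\cap\mathcal{P}$. Since $\Sigma$ is incompressible and each pants of $\mathcal{P}$ is incompressible in $M_C$, a standard innermost-disk argument removes any intersection curve that is inessential on a pants of $\mathcal{P}$: such a curve either bounds a disk on $\Sigma$ that can be isotoped across the pants, or it yields a genuine compression for $\Sigma$, contradicting incompressibility. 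After this, every curve of $\Sigma\cap\mathcal{P}$ is essential on both $\Sigma$ and $\mathcal{P}$.

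Next, examine the pieces of $\Sigma$ cut out by $\mathcal{P}$. Each piece lies inside a single pants block $R$ (either an $A$-block or an $S$-block), with boundary on $\partial R\cap\mathcal{P}$ together with arcs/curves on $\partial M_f\cap R$. Since $R$ is built from one or two regular ideal octahedra with totally geodesic boundary in the hyperbolic structure on $M_f$, normal surface theory with respect to the octahedral cell structure applies. The normal disk types in an ideal octahedron form a short explicit list, and assembling them across the identifications that define $R$ yields only a few possible normal pieces. After ruling out pieces that can be isotoped away (contradicting minimality of $|\Sigma\cap\mathcal{P}|$) or would force a compression of $\Sigma$, the remaining possibilities reduce to two: \emph{pants pieces} parallel to a boundary pants of $R$, and \emph{quad pairs}, i.e.\ twice-punctured annuli obtained by gluing two normal quadrilaterals across a pants as in \Cref{fig:quads}.

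Each quad pair comes equipped with an obvious compression annulus running to the horizontal boundary torus that it encircles. By \Cref{lem:compress_annulus}, compressing $\Sigma$ along each such annulus preserves incompressibility, and each compression replaces a twice-punctured annulus by a pair of pants parallel to a decomposing pants of $\mathcal{P}$. Iterating this untubing procedure exhausts all quad pairs, so the resulting surface is a disjoint union of decomposing pants (together with curves on $\partial M_f$). Reversing the procedure, $\Sigma$ is recovered from this union of pants by a sequence of tubings along horizontal boundary tori, which is precisely what it means for $\Sigma$ to be obtained by tubing pants in $M_C$.

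The main obstacle is the classification step of the second paragraph: one must genuinely enumerate the normal-disk types in the octahedral decompositions of the $A$-block and $S$-block and rule out every combination other than pants-parallel pieces and quad pairs. This requires using, in a coordinated way, the incompressibility of $\Sigma$, the minimality of $|\Sigma\cap\mathcal{P}|$, and the hyperbolicity of $M_f$ (the latter being needed, as in the proof of \Cref{lem:compress_annulus}, to forbid annuli in $M_f$ whose boundaries lie on essential curves of $\partial M_f$ from producing surprise compressions). The remaining details are geometric bookkeeping inside two explicitly understood pieces.
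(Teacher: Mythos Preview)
Your outline is close to the paper's, but you invert the two main steps, and that inversion is where the real work hides. The paper does the annular compressions \emph{first}: since every essential curve on a pair of pants is boundary-parallel, each curve of $\Sigma\cap\mathcal{P}$ bounds an annulus in $\mathcal{P}$ running to $\partial M_C$, and compressing along these annuli (your ``untubing'') produces an incompressible surface $\Sigma'$ that is \emph{disjoint} from $\mathcal{P}$. Only then does the paper invoke normal surface theory, and at that point each component of $\Sigma'$ sits entirely inside one block with boundary only on $\partial M_C$. You instead try to normalize $\Sigma$ while still allowing essential intersections with $\mathcal{P}$, so your pieces can have boundary curves on the pants faces of $R$; yet the two piece-types you list (pants-parallel pieces and quad pairs) carry no such boundary. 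Your classification therefore implicitly assumes what the paper's untubing step actually proves. Minimizing $|\Sigma\cap\mathcal{P}|$ by isotopy alone does not eliminate these intersections --- annular compression genuinely changes the surface --- so the ``geometric bookkeeping'' you defer is substantially harder in your ordering than in the paper's.

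A second difference worth noting: rather than working with an octahedral normal-disk enumeration, the paper observes that a truncated octahedron is a tetrahedron with a neighborhood of its $1$-skeleton removed, so each pants block is the exterior of the $1$-skeleton of a one- or two-tetrahedron triangulation of a closed manifold. This lets the paper cite Thompson's result that incompressible surfaces are normal, and then the standard triangle/quad dichotomy in a tetrahedron gives the classification with essentially no case analysis. Triangles pair up to give boundary-parallel pants; quads pair up to give either a copy of $S_4\times\{\tfrac12\}$ (already a tubing of two pants) or the twice-punctured annulus of \Cref{fig:quads}, which carries a further compression annulus. That is the whole enumeration, and it is short precisely because $\Sigma'$ was made disjoint from $\mathcal{P}$ beforehand.
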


\begin{proof}
Let $(\Sigma,\del \Sigma)\subset (M_f,\del M_f)$ be an incompressible surface in $M_f$ that is isotopic into $M_C$ and has horizontal boundary. First isotope $\Sigma$ into $M_C\subset M_f$. With this done, we will view $\Sigma$ from now on as a subset of $M_C$. We will untube $\Sigma$ along compression annuli in $M_C$, and show that the resulting surface is a disjoint union of pants. 

Since components of $\del \Sigma$ must be parallel on $\del M_C$ to pants boundaries, we can isotope $\Sigma$ away from the pants in a neighborhood of $\del \Sigma$. It follows that $\Sigma \cap \{\text{pants}\}$ is a disjoint union of essential simple closed curves. If $\alpha$ is such a curve, then $\alpha$ is parallel to a boundary component $\del Y$ of a pair of pants $Y$, since all closed curves on pants are boundary parallel. Thus $\alpha$ forms one boundary curve of an annulus $A\subset Y$ whose other boundary curve is on $\del Y$. Therefore we can untube $\Sigma$ along $A$, and the resulting surface will have one fewer curves of intersection with the pants (\Cref{fig:tubing} may be helpful). Doing this surgery at every such $\alpha$ results in a surface $\Sigma'$ that is disjoint from pants. Since $M_C$ is finite volume and hyperbolic, we may assume that no component of $\Sigma'$ is an annulus. By \Cref{lem:compress_annulus}, $\Sigma'$ is incompressible in $M_C$.

Since the pants partition $M_C$ into pants blocks, each component of $\Sigma'$ is contained in such a block. An ideal pants block is made up of ideal octahedra, so a pants block is made up of vertex-truncated octahedra. A truncated octahedron is the same as a tetrahedron with a neighborhood of its 1-skeleton removed, as shown in \Cref{fig:decomp}. With this point of view, we consider the shaded faces (blue in \Cref{fig:decomp}) to be the faces of the tetrahedra. Since pants blocks are obtained by gluing the shaded faces of the truncated octahedra, we can see them as manifolds triangulated by either one or two tetrahedra, with a neighborhood of the 1-skeleton of the triangulation removed. That is, a pants block is the exterior of the 1-skeleton of a triangulation of a closed manifold. Since $\Sigma'$ is incompressible in the pants block, it is normal (after isotopy) in this closed manifold by \cite[Claim 1.1]{Tho94}, and hence in the truncated tetrahedra it decomposes into (truncated) normal triangles and quads having normal arcs on shaded faces (see \cite{Gor01} or \cite{JR89} for background on normal surface theory). Since an $A$-block is obtained by doubling a truncated tetrahedron along its faces,  normal triangles glue to other triangles to give pants, which are isotopic to boundary pants. For the same reason, quads glue to other quads. The surfaces coming from pairs of quads are either, (1) isotopic to $S_4\times\{\rfrac{1}{2}\}$, and so are a tubing of two 3-punctured spheres isotopic into the boundary of the $A$-block, or (2) a twice punctured annulus $A_2$ with boundary the two curves on the $S_4$ boundaries of the $A$-block. In the latter case one can find a compression annulus, as shown in \Cref{fig:quads}. For $S$-blocks there is one quad that is isotopic to $T_1\times \{\rfrac{1}{2}\}$, and this is the only quad that can glue up coherently as part of a properly embedded surface. Triangles must glue up in pairs, and result in surfaces that have compression annuli, and untube to copies of $T_1\times \{\rfrac{1}{2}\}$. We leave further details for the $S$-block case to the reader, as these will not play a part going forward. This shows that $\Sigma'$ is a disjoint union of pants, and that $\Sigma$ was obtained by tubing together pants.
\end{proof}

\vspace{-5pt}
\section{Augmented braids}\label{sec:braids}

In this section we use Agol's construction to obtain explicit manifolds $M_C$, which will be used in \Cref{sec:proof} to build the links of \Cref{thm:main}. These manifolds are very similar to those used in Agol's proof, with the main difference being that we choose the monodromy $\phi$ so that the link resulting from filling the horizontal boundary has $l$ components. Although we describe the path in the pants graph differently than Agol, ours is essentially the same as his, but traced out (almost) twice.

Fix an integer $l\ge 1$, and let $n \in \{lN \mid N\in \ZZ_{\ge 1}, lN\ge 4\}$. Let $D_{z,r}$ be the disk of radius $r$ centered at $z\in\CC$, and let $D_n=D_{0,2}\setminus \left(\bigcup_{k=0}^{n-1} D_{e^{2\pi \I k/n},\frac{1}{n}}\right)$, where $\I=\sqrt{-1}$. That is, $D_n$ is a disk with $n$ small holes, equally spaced around a circle. Let $p_k$ be the boundary component of $D_n$ coming from removing the disk $D_{e^{2\pi i k/n},\frac{1}{n}}$, and let $q$ be the boundary component of $D_n$ coming from the boundary of $D_{0,2}$. Define a homeomorphism $\phi:D_n\to D_n$ by $z\mapsto e^{-2\pi \I l/n}z$, and let $T_\phi=\qt{(D_n\times I)}{(x,0)\sim (\phi(x),1)}$ be the associated mapping torus.

By our choice of $\phi$, it is clear that $T_\phi$ has exactly $l+1$ boundary components, since we chose $n$ to be a multiple of $l$. Let $L_q$ be the boundary component coming from $q$, which is fixed by $\phi$, and let $L_p=L_1\cup \dots \cup L_l$ be the union of boundary components coming from the $p_k$. We may think of $T_\phi$ as the exterior $\SS^3\setminus (L_q\cup L_p)$ of a closed braid in $\SS^3$, augmented with an additional unknotted component $L_q$. Alternatively, we can view $T_\phi$ as a closed braid in a solid torus, where $L_q$ is the boundary of the solid torus. We will generally take the latter view, and this is reflected in the figures. When we glue $D_n\times \{0\}$ to $\phi(D_n)\times \{1\}$, we can add a twist by any multiple of $2\pi$ without affecting the homeomorphism type of $T_\phi$. For the purpose of exposition, it will be convenient to include an extra twist by $4\pi$ in the counter-clockwise direction. With this convention, for $n=6$ and $l=1$, $T_\phi$ is the gray braid shown in \Cref{fig:M_C} (ignore for now the blue loops in the figure).

\begin{wrapfigure}[39]{r}{.33\textwidth}
	\vspace{5pt}
 	\raggedleft
   	\includegraphics[width=.29\textwidth]{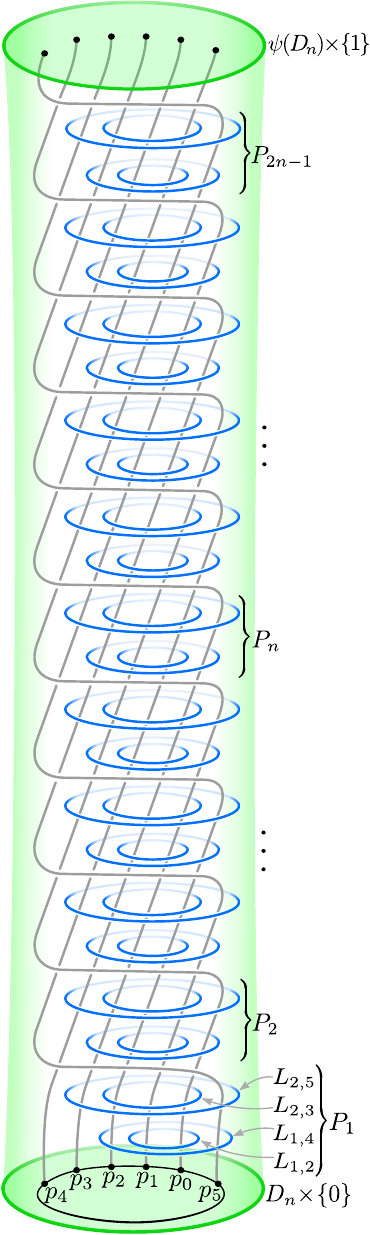}
   	\vspace{-5pt}
   	\caption{$T_\phi$, with loops $L_{i,j}$.}
  \label{fig:M_C}
\end{wrapfigure}

For $1\le i \le 4n$, and $2\le j \le n-2$, $j\not\equiv i \mod 2$, define $\beta_{i,j}$ to be the simple closed curves on $D_n$ that bounds a disk punctured by $p_k$, for $\frac{i-(j-1)}{2}\le k \le \frac{i+(j-1)}{2}$. Here we take $p_k$ indices modulo $n$. For $i$ even (so that $j$ is odd), $\beta_{i,j}$ is the curve that bounds $j$ punctures, centered at the puncture $p_{\rfrac{i}{2}}$. For $i$ odd ($j$ even), $\beta_{i,j}$  bounds $j$ punctures, centered between the two punctures $p_{\frac{i\pm 1}{2}}$.

For $1\le i \le 2n$, Let $P_i$ be the pants decomposition of $D_n$ defined by the collection of curves 
$$
\{\beta_{2i-1,2},\beta_{2i-1,4},\dots, \beta_{2i-1,n-2}, \beta_{2i,3},\beta_{2i,5},\dots, \beta_{2i,n-1}\}.
\qquad\qquad\qquad\qquad\qquad\qquad\qquad
$$

For the pants decomposition $P_i$ and its curves, indices are \emph{not} taken modulo $n$. This means that $\beta_{i,j}$ and $\beta_{i+n,j}$ describe the same curve on $D_n$, though we considered these to be distinct copies of this curve. Similarly, $P_i$ and $P_{i+n}$ define the same pants decomposition of $D_n$, but will be regarded as distinct copies of it.

\Cref{fig:P_1} shows $P_1$ for the case $n=6$; we get any other $P_i$ by rotating the curves of $P_1$ by an angle of $\frac{2\pi i}{n}$. Note that $P_i$ contains $n-2$ simple closed curves. For $0\le k\le n-2$, let $P_i^k$ be the pants decompositions consisting of the first $k$ curves of $P_{i+1}$ union the last $(n-2)-k$ curves of $P_i$. For $n=6$, the sequence of pants decompositions 
$$P_1=P_1^0, P_1^1, \dots, P_1^4=P_2 \qquad\qquad\qquad\qquad\qquad\qquad\qquad\qquad$$
 is shown in \Cref{fig:P1_to_P2}. In \Cref{fig:subpath} we show a simplified picture to save space---only the subdisk of $D_n$ cut out by the convex hull of the points $p_k$ is shown, and the $\beta_{i,j}$ curves are represented by transversely oriented arcs. The transverse orientation shows how to complete the arc to a curve---it should be an inward normal for the curve, and the rest of the curve should lie outside the hexagon.

The sequence of pants decompositions 
$$P_i=P_i^0,P_i^1,\dots,P_i^{n-2}=P_{i+1}\qquad\qquad\qquad\qquad\qquad\qquad\qquad\qquad$$
 is a path $\path_i$ in the pants graph from $P_i$ to $P_{i+1}$, of length $n-2$. The path $\path_i$ is obtained by shifting each of the curves of $P_i$, one at a time and in order, by a rotation of $2\pi/n$. Let $\path$ be the composition of the paths $\path_i$ for $1\le i\le 2n+1-l$. Note that the composition of paths $\path_1, \dots, \path_n$ is a loop of length $n(n-2)$ that begins at $P_1$ and ends at $P_{n+1}$, and the composition $\path_{n+1},\dots,\path_{2n}$ is a path of length $(n-l)(n-2)$ beginning at $P_{n+1}$ and ending at $P_{2n+1-l}$. Thus $\path$ is a path from $P_1$ to $P_{2n+1-l}$ of length $m:=(2n-l)(n-2)$. Since $P_1$ and $P_{n+1}$ are two copies of the same pants decomposition, we see that the path $\path$ results from traversing a loop in the pants graph, then partially traversing the same loop again, by an amount depending on $l$. 

\begin{figure}
	\begin{subfigure}{.25\textwidth}
 		\raggedright
   		\includegraphics[width=.95\textwidth]{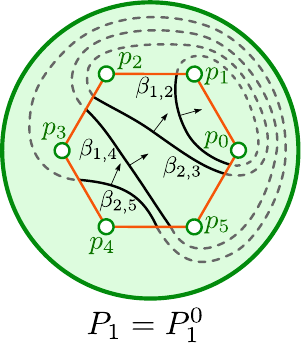}
   		\caption{}
   		\label{fig:P_1}
	\end{subfigure}
	\begin{subfigure}{.74\textwidth}
 		\raggedleft
   		\includegraphics[width=.95\textwidth]{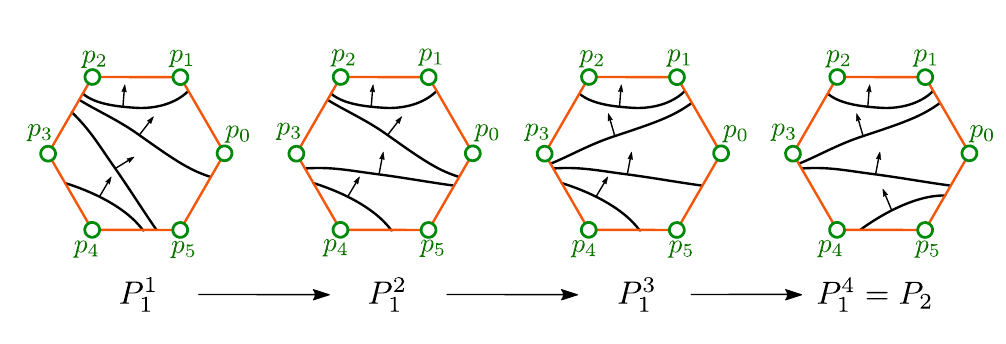}
   		\caption{}
   		\label{fig:subpath}
	\end{subfigure}	
	\caption{The sequence of pants decompositions from $P_1$ to $P_2$, for $n=6$. The drawing of $P_1$ on the left shows how to interpret the simplified drawings of $P_1^k$, $1 \le k\le 4$, on the right.}
	\label{fig:P1_to_P2}
\end{figure}

Since $\phi(P_1)=P_{2n+1-l}$, the construction of \Cref{sec:const} can be applied to the path $\path$, giving a compact manifold $M_C$. In particular, at each step of the path $\path$, a curve $\beta_{i,j}$ is replaced by another curve $\beta_{i',j'}$. If $\beta_{i,j}$ is replaced at step $k$, then let $L_{i,j} = \beta_{i,j}\times \{\frac{k}{m}\}$ for $1\le k\le m$. Then $M_C=T_\phi \setminus \bigcup_{i,j}\N(L_{i,j})$. We will denote by $B_{i,j}$ the boundary component of $M_C$ coming from removing $\N(L_{i,j})$ from $T_\phi$. Actually, it will be convenient for the exposition to come to modify the $L_{i,j}$ by an isotopy in $T_\phi$. In particular, we arrange so that $L_{i,j}$ lies in the fiber $D_n\times \{\frac{i}{2(2n+1-l)}\}$, and do the same for the $B_{i,j}$ boundary components of $M_C$. This isotopy only moves curves within a pants decomposition $P_k$ past each other, and such an isotopy exists since curves in $P_k$ are disjoint in $D_n$. The remaining boundary components of $M_C$, corresponding to the augmentation circle $L_q$ and the braid components $L_p$ of $L$, will be denoted by $B_q$ and $B_p$, respectively. Note that $B_p$ is potentially a union of multiple boundary components, but we will rarely need to refer to them individually. \Cref{fig:M_C} shows $T_\phi$ with the $L_{i,j}$ loops colored blue (alternatively, we can view these loops as representing boundary components $B_{i,j}$ in $M_C$). Note that a loop $L_{i,j}$ encircles $j$ strands of the braid $L_p$, and is at height $i$ relative to the other loops. We say that $B_{i,j}$ has \define{width} $j$. We will take the width of $B_q$ to be $n$, and the width of each component of $B_p$ to be 1.

\Cref{fig:pants} shows the pairs of pants along which $M_C$ decomposes into $A$-blocks, for part of $M_C$ (note that $M_C$ has no $S$-blocks). In this drawing, as in \Cref{fig:M_C}, we think of $B_q$ as the boundary of the solid torus containing $M_C$. The pants meeting $B_q$ are the outermost ones (in blue). Each of the pants is either a twice punctured disk or a punctured annulus, and the same pattern of pants continues in the rest of $M_C$. Here we are using the term \emph{puncture} somewhat non-standardly, to refer to a boundary component that meets a meridian of a component in $B_p$. 

\begin{figure}[h]
	\begin{subfigure}{.32\textwidth}
 		\centering
   		\includegraphics[width=.6\textwidth]{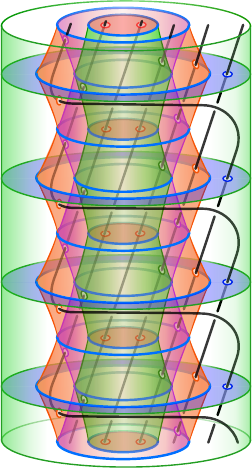}
   		\caption{}
   		\label{fig:pants}
	\end{subfigure}
	\begin{subfigure}{.32\textwidth}
 		\centering
   		\includegraphics[width=.67\textwidth]{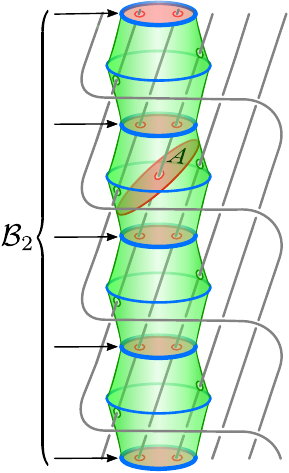}
   		\caption{}
   		\label{fig:C_2}
	\end{subfigure}	
	\begin{subfigure}{.32\textwidth}
 		\centering
   		\includegraphics[width=.6\textwidth]{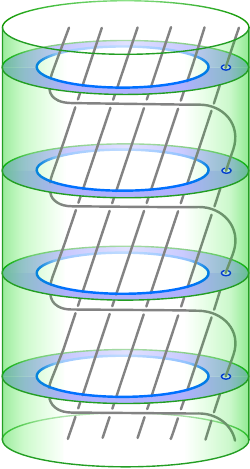}
   		\caption{}
   		\label{fig:C_n}
	\end{subfigure}	
	\caption{Left: $M_C$ decomposes into $A$-blocks along pants. With curves in $\B$ omitted, one can also interpret this as the complex $\C$. Center: the subcomplex $\C_2\subset \C$, with a compression annulus $A$. Right: the subcomplex $C_n\subset \C$. In the center and right picture, we show $B_p$ but omit filled tori that don't meet the subcomplex, to avoid cluttering.}
	\label{fig:subcomplexes}
\end{figure}

For each pair of pants $Y$ in $M_C$, let the \define{size} of $Y$ be the largest width of a horizontal boundary component that $Y$ meets. Thus in \Cref{fig:pants} the size 2 pants are the twice-punctured disks, for example. Note that size $k\ge 3$ pants always have another boundary component on a width $k-1$ loop.

\section{Small knots of large genus}\label{sec:proof}

In this section we prove \Cref{thm:main}. For the sake of clarity, we have relegated the bulk of this work to Lemmas \ref{lem:C_B}, \ref{lem:small}, and \ref{lem:heegaard}. In particular, in \Cref{lem:C_B} we get much of the technical work out of the way, and in Lemmas \ref{lem:small} and \ref{lem:heegaard} we then address smallness and Heegaard genus. At the end of the section we assemble the pieces to prove the theorem, which we restate below for convenience:

\begin{repthm}{thm:main}
For every $l\ge 1$, there exists a family $\{K_N\}_{N=4}^\infty$ of small $l$-component links such that $\SS^3\setminus K_N$ has Heegaard genus $n:=Nl$. Furthermore, $K_N$ is an $n$-bridge link, and can be constructed to have fewer than $4\pi n^5$ crossings.
\end{repthm}

The links we are after will result from long Dehn fillings along all $B_{i,j}$ boundary components of $M_C$, and along the component $B_q$. Let $\gamma=\{\gamma_q\} \cup \{\gamma_{i,j}\}_{i,j}$ be a system of filling slopes for these boundary components. Predictably, the slope for $B_{i,j}$ is $\gamma_{i,j}$, and the slope for $B_q$ is $\gamma_q$. We will require that every slope in the system $\gamma$ has the form $1/s$ for some $s\in \ZZ\setminus \{0\}$. We may choose $\gamma$ so that we avoid boundary slopes given by \Cref{thm:Hatcher}, and so that we avoid bad fillings as described in \Cref{sec:heegaard}. We may also take all slopes to be sufficiently long so that exceptional (i.e., non-hyperbolic) fillings are avoided. For the sake of convenience, we will refer to such a choice of $\gamma$ as a \define{generic} filling system. When we have fixed such a $\gamma$, we will denote by $M_\gamma$ the Dehn filling of $M_C$ along $\gamma$.

Let $\B_2\subset \del M_C$ consist of all $B_{i,j}$ of width 2, and fix some $\B\subset \B_2$. For such $\B$, define $L_\B=\{L_{i,j} \mid B_{i,j}\in \B\}$, and let $\Mf=M_\gamma\setminus \bigcup_{L_{i,j}\in L_\B} \N(L_{i,j})$. In other words, $\Mf$ is the manifold obtained by drilling out from $M_\gamma$ the surgery solid tori for $B_{i,j}\in \B$. Let $\B^c=\del M_C\setminus (\B \cup \{L_p\})$, viewed as a subset of $\Mf \supset M_C$. That is, $\B^c$ consists of all the $B_{i,j}$ that are filled in $\Mf$

Let $\C\subset \Mf$ be the 2-complex consisting of the pants in $M_C\subset \Mf$, and the tori $B_{i,j}\in \B^c\subset \Mf$ (see \Cref{fig:pants}). $\C$ carries all surfaces in $\Mf$ with horizontal boundary that are obtained by tubing pants. Next, define $\C_2$  to be the subcomplex of $\C$ consisting of all tori $B_{i,j}\in \B^c$ of width 2 and 3, and all pants of size 2 and 3. Define $\C_n\subset \C$ to be the subcomplex consisting of $B_q$, all tori $B_{i,j}$ of width $n-1$, and all pants of size $n$. The subcomplexes $\C_2$ and $\C_n$ are shown in \Cref{fig:C_2,fig:C_n}, respectively.

\begin{lem}\label{lem:C_B}
Fix any generic filling system $\gamma$ for $M_C$. Choose $\B \subset \B_2$, and define $\Mf$ as above. Let $(\Sigma, \del \Sigma)\subset (\Mf,\del \Mf)$ be an essential surface with horizontal boundary, and let $\piSig$ be the surface obtained by compressing $\Sigma$ along all compression annuli. Then 
\begin{itemize}
\item[(1)] $\Sigma$ is obtained by tubing together pants in $M_C$, 
\item[(2)] if $\B=\emptyset$ and no pants of $\Sigma$ is a twice-punctured disk, then $\piSig$ is carried by $\C_n$,
\item[(3)] if no pants of $\Sigma$ has boundary on $B_q$, then $\piSig$ is carried by $\C_2$.
\end{itemize}

\end{lem}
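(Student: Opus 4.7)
The strategy is to prove (1) first, reducing to \Cref{lem:tubing}, and then derive (2) and (3) by analyzing which tubing patterns survive compression. For (1), the filling system $\gamma$ is generic, so each slope $\gamma_{i,j}$ and $\gamma_q$ lies outside the Hatcher polyhedron of \Cref{thm:Hatcher} of boundary slopes of essential surfaces in $M_C$. Hence any component of $\Sigma \cap V$ in a filling solid torus $V$ cannot meet $\del V$ essentially (the boundary slope would have to be $\gamma_{i,j}$ or $\gamma_q$, which by Hatcher does not bound an essential surface in $M_C$), so innermost-disk surgeries isotope $\Sigma$ out of every filled torus while preserving essentiality. With $\Sigma \subset M_C$ and $\Mf$ having hyperbolic interior by genericity of $\gamma$, \Cref{lem:tubing} expresses $\Sigma$ as a tubing of pants in $M_C$, proving (1).

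For (2) and (3), I form $\piSig$ by compressing along all compression annuli, noting that each such compression is an untubing at some torus in $\del \Mf$. The remaining tubings of $\piSig$ therefore occur only at tori in $\B^c \cup \{B_q\}$, all of which are filled in $\Mf$. By \Cref{lem:compress_annulus}, $\piSig$ remains incompressible. Viewing pants of $\piSig$ as vertices and remaining tubings as edges produces a graph in which each pants of size $k$ has degree equal to the number of its non-$B_p$ boundaries (one for $k=2$, two for $k\ge 3$), and in which the two pants incident to a width-$w$ tubing torus have sizes in $\{w, w+1\}$. In this language (2) asserts that the graph collapses onto $\C_n$, and (3) asserts that it collapses onto $\C_2$.

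The main obstacle is the combinatorial--geometric step showing that no intermediate configuration survives. In (2), the no-twice-punctured-disk hypothesis eliminates size-$2$ pants, and I would argue that any tubing chain dipping below width $n-1$ produces a locally twice-punctured annulus in some adjacent $A$-block (precisely the configuration of \Cref{fig:quads}), whose compression annulus would still be available and so contradict maximality of the compression forming $\piSig$. Propagating this rules out every width $\le n-2$, forcing all pants to be size $n$ and all tubings to occur at width-$(n-1)$ tori or at $B_q$. Part (3) is dual: the no-$B_q$-boundary hypothesis eliminates size-$n$ pants, and the mirror $A$-block analysis caps the chain at width $3$, placing $\piSig$ in $\C_2$. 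Establishing this local-to-global propagation via the normal-surface analysis inside $A$-blocks from the proof of \Cref{lem:tubing} is the technical heart of the argument.
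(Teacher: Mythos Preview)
Your argument for (1) is essentially the paper's: push $\Sigma$ off the filling solid tori using that $\gamma$ avoids boundary slopes, then invoke \Cref{lem:tubing}. That part is fine.

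The gap is in (2) and (3). You try to rule out intermediate configurations by finding a compression annulus (``precisely the configuration of \Cref{fig:quads}'') and contradicting maximality of the compressions defining $\piSig$. But a compression annulus must have one boundary component on $\del\Mf$. In case (2) you have $\B=\emptyset$, so $\del\Mf=B_p$; the red-dome annulus of \Cref{fig:quads} lands on a horizontal torus $B_{i,j}$, which is now \emph{filled}, so it is not a compression annulus in $\Mf$ at all. The same problem occurs in (3) for every width $\ge 3$: all of those tori are filled, so the would-be compression annuli you invoke do not exist, and there is nothing to contradict. Your propagation mechanism therefore never gets started.

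What the paper does instead is exploit the filling rather than fight it. Because each $\gamma_{i,j}$ has the form $\tfrac{1}{s}$, a tubing annulus on a filled torus $B_{i,j}\in\B^c$ meets the meridian disk of the surgery solid torus once and can be \emph{isotoped across} that solid torus. This makes the two tubings $A_{i,j}^+$ and $A_{i,j+2}^-$ isotopic across the intervening $A$-block, and lets one collapse the entire carrier complex $\C$, by isotopy, down to $\C_n$ (after deleting the size-$2$ pants) or up to $\C_2$ (after deleting the size-$n$ pants). The reduction is an isotopy of the complex carrying $\piSig$, not a sequence of further annular compressions; compressions only enter when a target torus happens to lie in $\B$. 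You are missing exactly this ``isotope across the filled torus'' move, which is the technical heart of the lemma.
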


\begin{proof}

Let $\Sigma$ be an essential surface in $\Mf$. First we show that $\Sigma$ is isotopic into $M_C$. Let $B\in \B^c$, and let $\tau(B)$ be the filling solid torus for $B$. Consider a component $\alpha$ of the intersection of $\Sigma$ with $B$, which is necessarily a simple closed curve. First, if $\alpha$ is null-homotopic in $B$, then since $\Sigma$ is incompressible and $\Mf$ is irreducible, the disk $\alpha$ bounds in $B$ must bound a ball in $\Mf$ with another disk in $\Sigma$ (we may assume that $\alpha$ is innermost in $\Sigma$). In this case we can isotope $\Sigma$ to remove the intersection $\alpha$. If $\alpha$ is essential in $B$, then it cannot bound a disk, since $\gamma$ was chosen to avoid boundary slopes. In this case there must be another intersection curve $\alpha'$ isotopic to $\alpha$ in $B$, such that $\alpha$ and $\alpha'$ bound an annulus in $\Sigma\cap\tau(B)$. This annulus cuts off a solid torus with an annulus in $B$, and can be isotoped out of $\tau(B)$ across this solid torus, thus removing the intersections $\alpha$ and $\alpha'$ (here we assume the annulus in $\Sigma\cap \tau(B)$ is outermost in $\tau(B)$). It follows that $\Sigma$ is isotopic into $M_C$, and hence by \Cref{lem:tubing} it is obtained by tubing pants in $M_C$, thus establishing (1). Since $\piSig$ is obtained by compressing $\Sigma$ along compression annuli, by \Cref{lem:compress_annulus} it is incompressible. Let $Y_1,\dots, Y_k$ be the pants that tube together to give $\piSig$ (and therefore $\Sigma$).

A key observation is that when two pants $Y_1$ and $Y_2$ are tubed together along some $B\in \B^c$, the attached annulus can lie to either side of the filling torus, and can be isotoped across it. This is because the filling slope is of the form $\frac{1}{s}$, and therefore intersects each boundary component of the attached annulus once. More concretely, the solid torus is fibered as $D^2\times \SS^1$, and each fiber intersects the annulus in an arc, which can be isotoped across the disk. This shows how to build an isotopy of the annulus.

Each $B_{i,j}$ of width at least 3 and at most $n-2$ meets exactly $4$ pairs of pants in $\C$. Of these 4, let $Y_-$ and $Y_-'$ be the two of size $j$, and let $Y_+$ and $Y_+'$ be the two of size $j+1$. If $B_{i,j}\in \B^c$, let $A_{i,j}^-$ be the twice-punctured annulus resulting from tubing $Y_-$ and $Y_-'$ along $B_{i,j}$, and let $A_{i,j}^+$ be the result of tubing $Y_+$ and $Y_+'$ along $B_{i,j}$ (we will assume that the attaching annuli stay on $B_{i,j}$). Note that by the observation in the above paragraph, the choice of tubing of $Y_\pm$ and $Y_\pm'$ does not matter, since the attaching annulus can be isotoped across the surgery solid torus. If $B_{i,j}\in \B$, then we define  $A_{i,j}^-\coloneqq Y_-\cup Y_-'$ and $A_{i,j}^+\coloneqq Y_+\cup Y_+'$. For $B_{i,j}$ of width $n-1$, $B_{i,j}$ only meets 3 pairs of pants. In this case we can define $A_{i,j}^-$ in the same way as above, and identify $A_{i,j}^+$ with the pair of pants meeting $B_{i,j}$ and $B_q$. For $B_{i,j}$ of width 2, $B_{i,j}$ again meets only 3 pairs of pants, and we can define $A_{i,j}^+$ as above, and identify $A_{i,j}^-$ with the size 2 pants (i.e., the twice punctured disk) meeting $B_{i,j}$.

It follows that if $B_{i,j},B_{i,j+2}\in \B^c$ then $A_{i,j}^+$ is isotopic to $A_{i,j+2}^-$, by an isotopy across the $A$-block that they bound (see \Cref{fig:iso}). If, on the other hand, $B_{i,j}\in \B^c$ and $B_{i,j+2}\in \B$, then the same isotopy will take $A_{i,j}^+$ into $A_{i,j+2}^-\cup B_{i,j+2}$, which shows that $A_{i,j}^+$ has a compression annulus. Compressing this annulus results in a pair of once-punctured annuli isotopic to $A_{i,j+2}^-$. Similarly, if $B_{i,j+2}\in \B^c$ and $B_{i,j}\in \B$ then $A_{i,j+2}^-$ has a compression annulus, and after an annular compression is isotopic to $A_{i,j}^+$.

\begin{figure}[h]
 	\centering
   	\includegraphics[width=.6\textwidth]{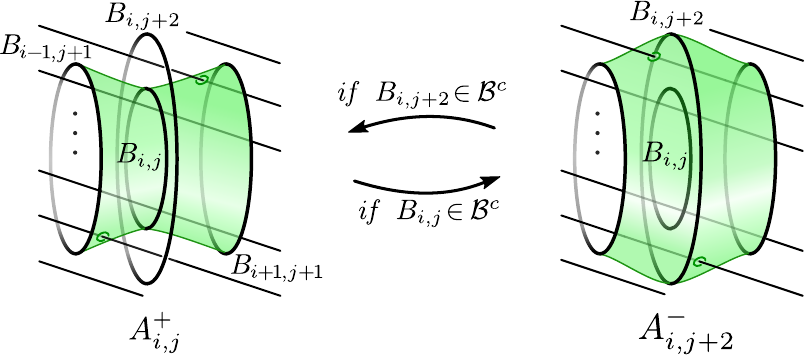}
   	\caption{If $B_{i,j}\in\B^c$, then $A_{i,j}^+$ can be isotoped onto $A_{i,j+2}^-$, possibly after an annular compression. Similarly, if $B_{i,j+2}\in \B^c$ then $A_{i,j+2}^-$ can be isotoped onto $A_{i,j}^+$, possibly after a compression.}
 \label{fig:iso}
\end{figure}

We are now ready to prove (2) and (3). First, assume that $\B=\emptyset$ and none of the pants $Y_1,\dots,Y_k$ are twice-punctured disks. Our goal will be to reduce $\C$ to the subcomplex $\C_n$, while ensuring that $\C_n$ also carries $\piSig$. First, since it is assumed that none of the $Y_i$ are twice-punctured disks, we may remove these from $\C$, and the resulting complex will still carry $\piSig$. With the twice-punctured disks removed, each $A_{i,2}^+$ for $1\le i \le 2n-l$ can be isotoped into $A_{i,4}^-$. The key here is that by removing the twice-punctured disks, we guarantee that the isotopy of $A_{i,2}^+$ extends to an isotopy of $\C$. In particular, since $B_{i,2}\in \B^c$, $B_{i,2}\setminus A_{i,2}^+$ is an annulus which we can isotope across the surgery solid torus and into $A_{i,2}^+$, thus allowing us to isotope $A_{i,2}^+$ across its $A$-block and into $A_{i,4}^-$. Thus we get a new complex that does not contain any $A_{i,2}^+$ or $B_{i,i+2}$. We continue in this way for all $A_{i,k}^+$, $k\le n-2$. Note that we must do this in order (with respect to $k$), since $A_{i,j}^+$ must be removed before $A_{i,j+1}^+$ can be removed. With the above reduction complete, only the subcomplex $\C_n$ remains, and $\piSig$ is still carried since we only changed $\C$ by removing unnecessary pants, and subcomplexes isotopic into $\C$, or isotopic into $\C$ after an annular compression.

(3) is proved similarly. If no pants of $\Sigma$ has boundary on $B_q$, then all pants meeting $B_q$ can be removed from $\C$, and $\C$ will still carry $\piSig$. With these pants removed, we can isotope punctured annuli $A_{i,n-1}^-$ down to $A_{i,n-3}^+$, and continue in this way until we are only left with tori of width $2$ and $3$, and annuli $A_{i,2}^+$, which is exactly the subcomplex $\C_2$. Note that since $\B\subset \B_2$, only tori of width 2 can be drilled out, so all of these isotopies are possible (or possibly compressions then isotopies in the case of $A_{i,4}^-$).
\end{proof}

\begin{lem}\label{lem:small}
For a generic filling system $\gamma$, $M_\gamma$ contains no closed incompressible surfaces.
\end{lem}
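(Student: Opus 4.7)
The plan is to argue by contradiction: suppose $\Sigma \subset M_\gamma$ is a closed incompressible surface, and derive a contradiction using the tubing machinery of Lemma~\ref{lem:tubing}. First I would isotope $\Sigma$ out of every filling solid torus $\tau(B)$ of $M_\gamma$, placing $\Sigma$ in $M_C$. This follows verbatim the argument in the proof of Lemma~\ref{lem:C_B}(1): null-homotopic components of $\Sigma \cap B$ are removed by innermost-disk arguments using incompressibility of $\Sigma$ and irreducibility of $M_\gamma$, while essential components cannot bound disks in $\tau(B)$ (by Theorem~\ref{thm:Hatcher}, since $\gamma$ is generic) and therefore come in parallel pairs cobounding annuli that can be pushed out across $B$. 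With $\Sigma \subset M_C$, I would then invoke Lemma~\ref{lem:tubing}: although it is stated for surfaces with horizontal boundary, its proof applies verbatim to closed surfaces, since the opening step of isotoping $\Sigma$ near $\del\Sigma$ is vacuous when $\del\Sigma = \emptyset$, and the remaining steps (intersecting pants in essential simple closed curves, untubing along pants annuli, and normal-surface analysis within each $A$-block) use neither nonemptiness nor horizontality of $\del\Sigma$. Hence $\Sigma$ is obtained by tubing a disjoint union of pants $Y_1, \dots, Y_k \subset M_C$ along annuli in tori of $\del M_C$, and closedness forces every boundary component of every $Y_i$ to participate in a tubing.

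I would then apply Lemma~\ref{lem:C_B} with $\B = \emptyset$ (so $\Mf = M_\gamma$). Any compression annulus for $\Sigma$ has $\del A^- \subset B_p$; after compressing all such annuli we obtain an essential surface $\piSig$ (possibly with boundary on $B_p$) to which parts~(2) and~(3) apply. I split into cases: if some $Y_i$ meets $B_q$ and none is a twice-punctured disk, then $\piSig$ is carried by $\C_n$; if no $Y_i$ meets $B_q$, then $\piSig$ is carried by $\C_2$; if some $Y_i$ is a twice-punctured disk the argument is handled separately. In each case the task is to show that no closed essential surface of $M_\gamma$ can arise in this way, exploiting the fact that the carrier subcomplex is too simple to accommodate one. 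Concretely, for $\C_n$ one argues that $\Sigma$ is either boundary-parallel to $B_q$ (which is filled in $M_\gamma$, so $\Sigma$ then compresses across a meridian disk of $\tau(B_q)$) or that a meridian disk of a width-$(n-1)$ filling solid torus produces a compression disk for $\Sigma$; the $\C_2$ case is analogous via a width-$2$ or width-$3$ filling torus; the twice-punctured-disk case uses a meridian disk of the appropriate component of $B_p$ to compress the tubings forced on $B_p$.

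The hard part will be the final combinatorial/geometric step: for each way closed surfaces can assemble by tubings within $\C_n$, $\C_2$, or involving twice-punctured disks, one must exhibit an explicit compression disk or a boundary-parallelism. The $1/s$-form of the filling slopes is used essentially here, since it guarantees that meridian disks of the filling solid tori interact with longitudinal tubing annuli in a controlled, low-intersection fashion, so that innermost-disk arguments deliver genuine compressions of $\Sigma$ in $M_\gamma$. Without this hypothesis these compression constructions break down, which is precisely why the $1/s$ restriction was built into the definition of a generic filling system.
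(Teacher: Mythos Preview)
Your overall strategy matches the paper's, but there is a genuine gap in your handling of the twice-punctured-disk case, and your $\C_n$ argument misses the key mechanism.

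\textbf{The twice-punctured-disk case.} You propose to ``use a meridian disk of the appropriate component of $B_p$ to compress the tubings forced on $B_p$.'' But $B_p$ is \emph{not} filled in $M_\gamma$: it is the boundary of the link complement. There are no meridian disks of $B_p$ available in $M_\gamma$, so this compression does not exist. The paper deals with this case by an entirely different argument: after compressing along all annuli meeting $B_p$, the surface $\piSig$ is a punctured sphere (since twice-punctured disks force all other pants to be punctured disks or punctured annuli tubed only along $B_{i,j}$), and it lives in the fibered manifold $\Mb$ with fiber $D_n$. Passing to a suitable cyclic cover, $\piSig$ misses a fiber and hence is itself a fiber; but fibers meet $B_q$, while $\piSig$ has boundary only on $B_p$. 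That is the contradiction. Nothing like a meridian-disk compression is available here.

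\textbf{The $\C_n$ case.} Your proposed dichotomy (``boundary-parallel to $B_q$'' versus ``meridian disk of a width-$(n-1)$ filling torus gives a compression'') is not what makes this work, and the second alternative is not obviously valid: a surface carried by $\C_n$ typically meets the width-$(n-1)$ tori longitudinally, so a meridian disk of such a torus intersects $\Sigma$ rather than compressing it. The paper's mechanism is that any surface carried by $\C_n$ contains both an annular subsurface parallel to a meridian of $B_q$ and one parallel to a longitude of $B_q$; hence for \emph{any} slope on $B_q$, in particular the filling slope $\gamma_q$, there is an annulus from $\piSig$ to a $\gamma_q$-curve on $B_q$. Capping with the meridian disk of the $B_q$ filling torus then gives an honest compression disk. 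Finally, note that your separate $\C_2$ case is unnecessary: once no pants is a twice-punctured disk, \Cref{lem:C_B}(2) already places $\piSig$ in $\C_n$ regardless of whether any pants meets $B_q$.
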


\begin{proof}
Suppose $\Sigma$ is a closed incompressible surface in $M_\gamma$. Applying \Cref{lem:C_B} with $\B=\emptyset$, it follows that $\Sigma$ is isotopic into $M_C$ and obtained by tubing pants. Let $\piSig$ be the surface obtained by compressing $\Sigma$ along all the compression annuli meeting $B_p$. This surface is obviously a tubing of pants isotopic into $M_C$, and by \Cref{lem:compress_annulus} it is also incompressible. 

Assume first that one of these pants is a twice-punctured disk. Then $\piSig$ must be a punctured sphere since all pants are either punctured disks or punctured annuli, and they are not tubed along punctures. Since $\piSig$ is isotopic into $M_C$, it is also isotopic into $\Mb$, which is a fibered manifold with fiber $D_n$ (in particular, it is an augmented braid closure). Although $\piSig$ may intersect every fiber of $\Mb$, there is some finite cyclic cover $\hatMb$ in which some fiber $F$ is disjoint from the lifts of $\piSig$. Since the only incompressible surface in the complement of a fiber is a fiber, it follows that the lifts of $\piSig$ are fibers and hence so was $\piSig$. But this is impossible since $\piSig$ only has boundary on $B_p$, and $B_q$ meets every fiber. Thus none of the pants is a twice-punctured disk.

\begin{figure}[h]
 	\centering
   	\includegraphics[width=.95\textwidth]{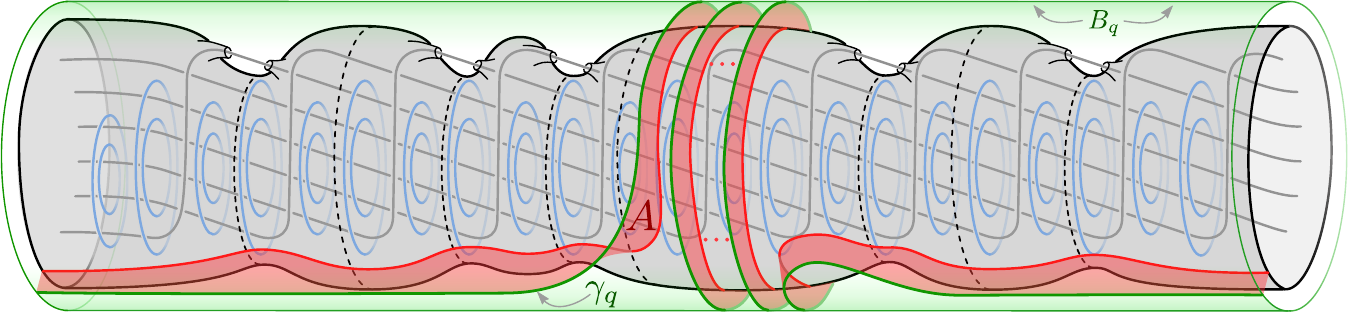}
   	\caption{A surface carried by the subcomplex $C_n$. Dotted curves show where to compress along annuli meeting $B_q$ and width $n-1$ loops to decompose into pants in $M_C$}
 \label{fig:compression}
\end{figure}

To finish the proof, we will return to viewing $\piSig$ in $M_\gamma$. Applying \Cref{lem:C_B} again with $\B=\emptyset$, $\piSig$ must be carried by the complex $\C_n$ since no pants is a twice-punctured disk. Any surface carried by $\C_n$ is obtained by tubing size $n$ pants along $B_q$ and $B_{i,n-1}$. A typical such surface is shown in \Cref{fig:compression}. This surface has an important property, which is shared by any surface obtained in this way. Namely, it has two annular subsurfaces, one that is isotopic to an annulus on $B_q$ parallel to its meridian, and one that is isotopic to an annulus on $B_q$ parallel to its longitude. It follows that there is an annulus $A$ having one boundary component on $\piSig$ and the other on $B_q$, such that the boundary on $B_q$ has slope $\gamma_q$ (the filling slope for $B_q$). Since $\gamma_q$ bounds a disk in the surgery solid torus filling $B_q$, the boundary of $A$ on $\piSig$ bounds a disk in $M_\gamma$, and clearly does not bound a disk on $\piSig$. Thus $\piSig$ is compressible, contradicting our assumption.
\end{proof}

\begin{lem}\label{lem:heegaard}
For a generic filling system $\gamma$ and $n\ge 4$, $M_\gamma$ has Heegaard genus $g_\gamma \ge n$.
\end{lem}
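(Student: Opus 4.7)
Plan. The proof will be by contradiction: suppose $g := g(M_\gamma) < n$, and fix a min-genus Heegaard surface $H$ of $M_\gamma$. I would un-fill the Dehn-filled tori of $M_\gamma$ back to $M_C$ one at a time in a carefully chosen order, using \Cref{thm:RS} at each step to control how $H$ (and its stabilizations) evolve. Since $\gamma$ is generic, case (3) of \Cref{thm:RS} is avoided throughout, so each un-filling is either case (1) (Heegaard genus preserved) or case (2) (genus increases by $1$, accompanied by a separating essential surface with two boundary components on the un-filled torus $B$ whose slope is the destabilization line).

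The key observation is that in any case (2) step the destabilization slope must be the longitude $\lambda_B$: since each filling slope has the form $1/s$, a generic choice of $s$ excludes every other candidate $\beta$ (which would require $\gamma_B\cdot\beta=1$ to hold for only isolated values of $s$). To exploit this via \Cref{lem:C_B}, I would order the un-fillings so that the width-$2$ tori $B_{i,2}$ are un-filled first, keeping all intermediate manifolds inside the $\Mf$ framework with $\mathcal{B}\subseteq\mathcal{B}_2$; then $B_q$; then the remaining width-$\geq 3$ tori. At each un-filling of a $B_{i,j}$, the essential surface $\Sigma$ produced in case (2) has horizontal boundary (both components on $B_{i,j}$, with longitudinal slope, and no boundary on $B_q$), so \Cref{lem:C_B}(3) applies and $\piSig$ is carried by $\C_2$. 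A structural analysis of surfaces carried by $\C_2$, paralleling the compressibility argument at the end of the proof of \Cref{lem:small} (the annulus-on-$B_q$ trick), will show that any such $\piSig$ contains an annular subsurface isotopic to a meridional annulus on a filled horizontal torus; this annulus caps off to a compressing disk in the un-filled intermediate manifold, contradicting essentiality via \Cref{lem:compress_annulus}.

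For the un-filling of $B_q$, the destabilization slope $\lambda_{B_q}$ is not horizontal, so \Cref{lem:C_B} cannot be applied directly. Here I would instead work in the fibered intermediate manifold $\Mb$ (all $B_{i,j}$'s filled, $B_q$ un-filled), which is a fibered manifold with fiber $D_n$, and rule out an essential surface with two longitudinal boundary components on $B_q$ by a cyclic-cover/fiber argument of the type used in \Cref{lem:small}: any such surface would lift in a finite cyclic cover to be disjoint from some fiber and hence be isotopic to a fiber, which is incompatible with having longitudinal boundary on $B_q$.

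Combining these case-by-case exclusions, the total number of case (2) un-fillings is bounded, and together with the lower bound on $g(M_C)$ (which exceeds $n$ for $n\geq 4$ once the combinatorial structure of the pants blocks and their many boundary tori is taken into account) this forces $g(H)\geq n$, contradicting $g<n$. The main technical obstacle is the structural analysis of surfaces carried by $\C_2$ (and analogously by $\C_n$ in any width-$(n-1)$ step that is not already covered by the $B_q$ argument) — specifically, showing that a surface with only two longitudinal boundary components on a single horizontal torus must in fact be compressible; the secondary obstacle is the separate fibered-manifold argument needed for $B_q$, where horizontality of the destabilization slope fails.
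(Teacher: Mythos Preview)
Your plan has the right skeleton (use Rieck--Sedgwick at a genus-increasing un-filling, then analyze the resulting essential surface via \Cref{lem:C_B}), but there is a genuine gap in how you invoke \Cref{lem:C_B}(3). You write that the essential surface $\Sigma$ has ``no boundary on $B_q$'' and conclude that $\piSig$ is carried by $\C_2$. But the hypothesis of \Cref{lem:C_B}(3) is that no \emph{pants} of $\Sigma$ has boundary on $B_q$, which is strictly stronger: $\Sigma$ can contain pants that meet $B_q$ and are tubed to one another along $B_q$, so that $\Sigma$ itself has no boundary there. This is precisely the case your plan does not cover, and it is the harder of the two cases. In the paper it is handled by a counting argument that crucially uses the genus bound $\mathrm{genus}(\widehat{\Sigma})\le n-1$: if any pants meets $B_q$, then $\Sigma$ must contain a chain of pants running from $B_{i_0,2}$ up to $B_q$ and back, and the genus bound forces exactly two pants of each size $\ge 3$; the resulting surface is then shown to be boundary-compressible after an explicit isotopy. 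Your plan never invokes the genus bound, so you have no mechanism to handle this case.

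The paper's strategy avoids most of the complications in your outline by being much more economical: it drills \emph{only} width-$2$ tori, one at a time, and stops at the \emph{first} drilling that increases the Heegaard genus. Because there are $2n-l$ width-$2$ tori and $l$ components of $B_p$, drilling them all gives $2n$ boundary tori and hence genus $\ge n$, so such a first increase must occur. At that moment the Heegaard genus of the just-filled manifold is still $g_\gamma\le n-1$, which supplies the genus bound needed for the Case~1 analysis. Staying within width-$2$ drillings also keeps the hypotheses $\B\subseteq\B_2$ of \Cref{lem:C_B} satisfied throughout; by contrast, your plan to subsequently un-fill $B_q$ and then the width-$\ge 3$ tori leaves that framework, and your $B_q$ step is internally inconsistent (you invoke the fibered manifold $\Mb$, in which \emph{all} $B_{i,j}$ are filled, but in your ordering the width-$2$ tori have already been un-filled). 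Finally, your sketch of the $\C_2$ compressibility argument as ``a meridional annulus capping to a disk'' misidentifies the mechanism: the $\C_2$ obstruction in the paper is a further compression \emph{annulus} meeting $B_p$ (contradicting that $\piSig$ was fully annulus-compressed), not a compressing disk; the disk-from-meridional-annulus trick is what is used for $\C_n$ in the proof of \Cref{lem:small}.
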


\begin{proof}
We will assume for a contradiction that $g_\gamma \le  n-1$. Since $\gamma$ is a generic filling system it contains no bad filling slopes. Consequently, each filling of a boundary component of $M_C$ by a solid torus is good and thus reduces the Heegaard genus by at most one (see \Cref{sec:heegaard}). Thus, if we drill out some of these solid tori in $M_\gamma$, then we increase the Heegaard genus by at most one for each drilling. 

In particular, we will drill out the filling solid tori for components $B_{i,2}$, starting with $B_{0,2}$, until the Heegaard genus increases as a result of drilling. This increase in Heegaard genus must happen for some filling, as we now show. Once again, we will let $\B \subset \B_2$ be the set of tori whose surgery solid torus has been drilled out, and define $\Mf$ as before. Since $\Mf$ has $l$ boundary component coming from $M_\gamma$, if we drill out all $2n-l$ such solid tori then it will have $2n$ boundary components, so its Heegaard genus will be at least $n=2n/2$. Thus the Heegaard genus of $\Mf$ must increase for at least one of these drillings. Let $B_{i_0,2}$ be the last torus drilled out, the drilling of which increases the Heegaard genus.

Now, let $H$ be a minimal genus Heegaard surface for the manifold $\Mf'$ resulting from re-filling $B_{i_0,2}$. By our choice of $\B$, $\mathrm{genus}(H)=g_\gamma \le n-1$. Since the Heegaard genus of $\Mf$ is strictly larger than that of $\Mf'$, $H$ is not a Heegaard surface for $\Mf$. Since $\gamma$ was chosen to avoid bad fillings, the core of the filling solid torus of $B_{i_0,2}$ is isotopic into $H$, and by cutting along it we get an almost Heegaard surface $H^\ast\subset \Mf$. The surface $H^\ast$ has two boundary components, and both are isotopic to longitudes of $B_{i_0,2}$. By \Cref{thm:RS}, there is an essential surface $\Sigma$ in $\Mf$ such that $\del\Sigma = \del H^\ast$, and $\mathrm{genus}(\Sigma)\le \mathrm{genus}(H^\ast)$. As noted following the statement of \Cref{thm:RS}, $\Sigma$ is obtained by compressing $H^\ast$, and may be disconnected. In the case that $\Sigma$ is disconnected, by $\mathrm{genus}(\Sigma)$ we mean the total genus of both components. Letting $\widehat{\Sigma}$ be the surface obtained by tubing together the boundary components of $\Sigma$ along $B_{{i_0},2}$, it follows that $\mathrm{genus}(\widehat{\Sigma})\le \mathrm{genus}(H)\le n-1$.

Since $\Sigma$ has horizontal boundary, \Cref{lem:C_B} implies that it is obtained by tubing pants in $M_C$. Let $\piSig$ and $\piSighat$ be the surfaces obtained by compressing $\Sigma$ and $\Sighat$, respectively, along all compression annuli having a boundary curve on $B_p$. Note that $\piSig$ and $\piSighat$ are also tubings of pants in $M_C$. If a component of $\piSighat$ contains a pants that is a twice-punctured disk, then by the argument in the proof of \Cref{lem:small} that component is a punctured sphere. All other components are punctured tori, since they are made of punctured annuli tubed together only at their boundaries (not their punctures). Let $\piSig'$ be the union of all connected components of $\piSig$ that contain some boundary component of $\piSig$. Note that by the preceding discussion, $\piSig'$ is either a union of two punctured disks (which tubes to a punctured sphere), or a punctured annulus (which tubes to a punctured torus). Note that $\piSig$ is incompressible by \Cref{lem:compress_annulus}, and hence so is $\piSig'$. Let $Y_1,\dots, Y_k$ be the pants in $M_C$ that tube together to give $\piSig'$.

\emph{Case 1: One of the pants $Y_i$ of $\piSig'$ has boundary on $B_q$.} Since $\piSig'$ has two boundary components on $B_{i_0,2}$, it must contain at least two pants meeting $B_{i_0,2}$. Therefore $\piSig'$ must contain enough pants to tube from $B_{i_0,2}$ up to $B_q$, then back down to $B_{i_0,2}$. This requires at least 2 pairs of pants of each size $j$, $3\le j \le n$, for a total of $2(n-2)$ pairs of pants. 

\begin{clm}
$\piSig'$ consists of exactly 2 pairs of pants of each size greater than or equal to 3.
\end{clm}

\begin{proof}[Proof of Claim]
If $\piSig'$ contains \emph{any} additional pants, then such pants will introduce at least two additional punctures, for a total of $2n-2$ punctures (note that there must be an even number of punctures, as they must tube in pairs). If one of the pants in $\Sigma_\circ'$ is size 2, then it must have two such, so we get an additional 2 punctures, and total is $2n$, so $\mathrm{genus}(\Sighat) \ge n$. If one of the pants is \emph{not} size 2, then $\Sigma_\circ'$ is a punctured annulus, whose boundary tubes together to give a punctured torus, so $\mathrm{genus}(\Sighat)\ge\frac{1}{2}(2n-2)+1=n$. Thus we conclude that $\Sigma$ has genus at least $n$, contradicting our assertion that $\mathrm{genus}(\Sighat)\le n-1$.
\end{proof}

It immediately follows that each pants of size $j$ for $4\le j \le n-1$ tubes to pants of size $j-1$ and $j+1$. The two size $n$ pants tube to each other, and to the two size $n-1$ pants, and the two size 3 pants meet the boundary $B_{i_0,2}$ and tube to the two size 4 pants. One possible configurations is shown in \Cref{fig:min_pants}. To rule out such a surface, we will need to demonstrate another isotopy of tubed pants. In particular, given a size $j$ pants and a size $j+1$ pants, with $3\le j\le n-1$, tubed together along their width $j$ boundary $B_{i,j}$, we have an isotopy which fixes the boundaries of these pants on $B_{i+ 1, j-1}$ and $B_{i+1, j+1}$, and takes them to two pants which are tubed along $B_{i+2,j}$. This is an isotopy of twice-punctured annuli, as shown in \Cref{fig:iso2}. Such an isotopy passes through the twice-punctured annulus that spans $B_{i+1,j-1}$ and $B_{i+1,j+1}$, and that is horizontal in the sense that it is contained in the level $i+1$ fiber if we view it in $T_\phi$. Note that we are able to isotope the tubed pants across $B_{i,j}$ and $B_{i+2,j}$ since these tori are filled and the slope of the pants boundaries are longitudes. 

\begin{figure}[h]
	\begin{subfigure}{.32\textwidth}
 		\raggedright
   		\includegraphics[width=.7\textwidth]{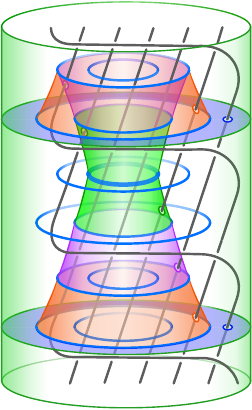}
   		\caption{}
   		\label{fig:min_pants}
	\end{subfigure}
	\begin{subfigure}{.32\textwidth}
 		\centering
   		\includegraphics[width=.77\textwidth]{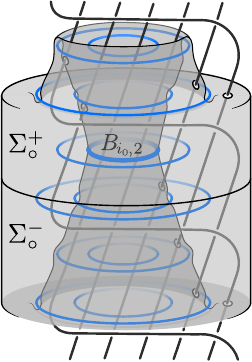}
   		\caption{}
   		\label{fig:piSig_pm}
	\end{subfigure}	
	\begin{subfigure}{.33\textwidth}
 		\raggedleft
   		\includegraphics[width=.8\textwidth]{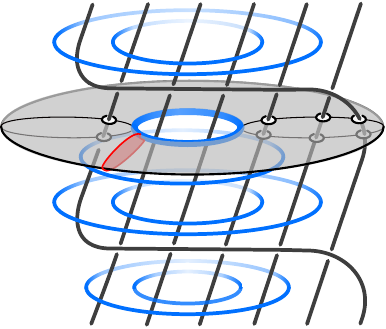}
   		\caption{}
   		\label{fig:collapsed} 
	\end{subfigure}	
	\caption{A subset of $\C$ consisting of exactly two pairs of pants of each size $\ge 3$ (left) tubes together to give a surface which can be cut into two pieces $\piSig^+$ and $\piSig^-$ (center), both isometric to horizontal annuli with $n-2$ punctures (right). By viewing $\piSig$ in this way we find a boundary compression (right).}
	\label{fig:subcomplexes}
\end{figure}

Now, let $\piSig^+$ be the half of $\piSig'$ containing exactly one pair of each size $j\ge 3$ of pants, and containing the size 3 pants having boundary on $B_{i_0+1,3}$. Let $\piSig^-$ be the other half (which also contains one pair of each size of pants). It follows that we can isotope $\piSig^+$ so that it is a tubing of the aforementioned horizontal twice-punctured annuli spanning $B_{i_0,j}$ and $B_{i_0,j+2}$, for $2\le j\le k$, where $k$ is $n-2$ is $n$ is even, and $k$ is $n-3$ if $n$ is odd. In the case where $n$ is odd, we will have in addition to these horizontal twice-punctured annuli the pants of size $n-1$. Altogether, these tube together to give a horizontal $(n-2)$ times punctured annulus. We can isotope $\piSig^-$ similarly, so that $\piSig'$ is as shown in \Cref{fig:collapsed}. 

We pause here to note that there are two possible ways of tubing together the pants meeting $B_q$, but both of the resulting surfaces are isotopic via an isotopy through the filling solid torus of $B_q$. So regardless of the choice of tubing, we can isotope $\piSig'$ so that it is as shown in \Cref{fig:collapsed}.

Returning to the proof, we find that $\piSig'$ must have a boundary compression disk, as shown in \Cref{fig:collapsed}. The demonstrated boundary compression is still a compression if we re-tube $\piSig$ along $B_p$ to obtain $\Sigma$, since it may be assumed to avoid a neighborhood of the punctures. Thus $\Sigma$ is not essential, contradicting our earlier assertion.

\begin{figure}[h]
 	\centering
   	\includegraphics[width=.82\textwidth]{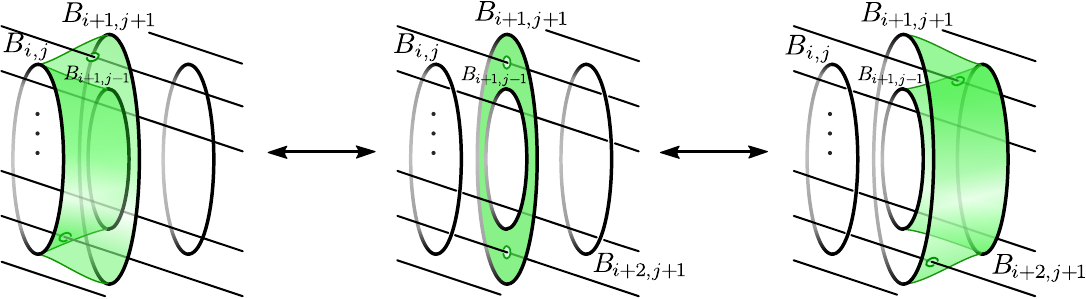}
   	\caption{When a pants of size $j$ and a pants of size $j+1$ share a boundary on $B_{i,j}$, and have their other boundaries on tori $B_{i+1,j\pm 1}$, there is an isotopy taking their $B_{i,j}$ boundary to $B_{i+2,j+1}$. Such an isotopy passes through a horizontal twice-punctured disk.}
 \label{fig:iso2}
\end{figure}

\emph{Case 2: No pants $Y_i$ has boundary on $B_q$.} By \Cref{lem:C_B} it follows that $\piSig$ is carried by $\C_2$. In this case $\piSig$ must contain as a subset a twice-punctured annulus $A_{i,i+3}^-$, for some $i$. But then $\piSig$ has a compression annulus $A$ which meets $B_p$, as shown in \Cref{fig:C_2}, contradicting our assumption that $\piSig$ was obtained by compressing it along \emph{all} compression annuli meeting $B_p$.	
\end{proof}

\begin{proof}[Proof of \Cref{thm:main}]

Fix $n\ge 4$. Let $\gamma=(\gamma_1,\dots, \gamma_m)$ be a system of Dehn filling slopes for the horizontal boundary components of $M_C$, with each slope of the form $\frac{1}{s_i}$. For any such system, the resulting manifold $M_\gamma$ is an $l$-component link exterior for the closure of an $n$-braid $K_n$. Since $n$-braid closures have Heegaard genus at most $n$, we deduce that $g(M_\gamma)\le n$. If we further assume that $k_i\ge 2\pi(2n-l)$ for each $i$, then by \cite[Theorem 1.1]{FP13} $(M_\gamma,H)$ is a good filling for any Heegaard surface of genus at most $n$ (in particular, for any minimal genus Heegaard surface). It follows from the discussion following Hatcher's theorem in \Cref{sec:essential} that if the system $\gamma$ is a boundary multi-slope, then for some $i$ increasing $s_i$ by 1 will give a slope that is not a boundary slope. Therefore we can find a filling system that is not a boundary multi-slope, and is a good filling, and such that $2\pi(2n-l)\le s_i \le 2\pi(2n-l)+2$. Such a choice of $\gamma$ is necessarily an exceptional filling by the $2\pi$-theorem of Gromov and Thurston \cite{BH96}.

Note that while \cite[Theorem 1.1]{FP13} is written in terms of the length $l(\frac{1}{s_i})$ of the filling slope in the induced Euclidean metric on a cusp cross-section, our statement follows since $l(\frac{1}{s_i})\ge s_i l(\lambda_i)\ge s_i$ for $\lambda_i$ the longitude of the $i^{th}$ boundary component.

For the choice of $\gamma$ described above, we have by \Cref{lem:small} that $M_\gamma$ contains no closed incompressible surfaces, and by \Cref{lem:heegaard} that $g(M_\gamma)\ge n$. It then follows that the Heegaard genus of $M_\gamma$ is exactly $n$. It also follows that $K_n$ must have bridge number $n$, for if it were smaller the Heegaard genus would have to be smaller as well.

In order to establish the claimed bound on the crossing number of $K_n$, we first observe that a $\frac{1}{s}$ Dehn filling along a boundary component of width $j$ is equivalent to adding $s$ full twists to the $j$ strands it encircles. One full twists of a band of $j$ strands produces $j(j-1)$ crossings, so $s$ twists produces $sj(j-1)$ crossings. Taking each $s_i$ in the range given in the previous paragraph, and taking into consideration that there are $2n-l$ loops of width $j$ for $2\le j \le n-1$, one loop of width $n$, and $(n-1)(2n-l)$ crossings coming from $L_p$, we get

\begin{align*}
c(K_n) &\le \overbrace{(n-1)(2n-l)}^{B_p}+\overbrace{(2\pi(2n-1)+2)n(n-1)}^{B_q} + \sum_{j=2}^{n-1} \overbrace{(2\pi(2n-1)+2)j(j-1)(2n-l)}^{B_{i,j}}\\
& < 4\pi n^2\left(n+2\sum_{j=2}^{n-1} j(j-1)\right) = 4\pi n^2 \left(n+ \frac{n(n-1)(2n-1)}{3}-n(n-1)\right) \le 4\pi n^5
\end{align*}
\end{proof}

\bibliographystyle{alpha2}
\bibliography{biblio}

\end{document}